\theoremstyle{definition}
\newtheorem{thm}{Theorem}
\newtheorem{prop}[thm]{Proposition}
\newtheorem{lem}[thm]{Lemma}
\newtheorem{defn}[thm]{Definition}
\newtheorem{fact}[thm]{Fact}
\newtheorem{rem}[thm]{Remark}
\newtheorem{ex}[thm]{Example}
\newtheorem*{first-step}{First Step}
\newtheorem*{third-step}{Third Step}
\newtheorem*{second-step}{Second Step}
\newtheorem*{main idea}{Main idea}
\newtheorem*{theorem*}{Theorem}
\newcommand{\op}{\operatorname}
\newcommand{\tightoverset}[2]{%
  \mathop{#2}\limits^{\vbox to -.5ex{\kern-0.75ex\hbox{$#1$}\vss}}}
\newcommand{\tightunderset}[2]{%
  \mathop{#2}\limits_{\vbox to -.5ex{\kern-0.75ex\hbox{$#1$}\vss}}}
\newcommand\smvee{\raise0.9ex\hbox{$\scriptscriptstyle\vee$}}
\newcommand{\T}{{\rm T}}
\newcommand{\E}{{\rm E}}
\newcommand{\R}{{\rm R}}
\newcommand{\s}{{\rm S}}
\newcommand{\N}{{\rm N}}
\newcommand{\p}{{\rm P}}
\newcommand{\A}{{\rm A}}
\newcommand{\li}{{\rm L}}
\newcommand{\Q}{{\rm Q}}
\newcommand{\slth}{\mathfrak{sl}(3, \mathbb{C})}
\date{}                                           
 \gdef\Young#1{\hbox{$\vcenter
 {\mathcode`,="8000\mathcode`|="8000
  \def,{\global\advance\cols by 1 &}%
  \def|{\cr
        \multispan{\the\cols}\hrulefill\cr
        &\global\cols=2 }%
  \offinterlineskip\everycr{}\tabskip=0pt
  \dimen0=\ht\strutbox \advance\dimen0 by \dp\strutbox
  \halign
   {\vrule height \ht\strutbox depth \dp\strutbox##
    &&\hbox to \dimen0{\hss$##$\hss}\vrule\cr
    \noalign{\hrule}&\global\cols=2 #1\crcr
    \multispan{\the\cols}\hrulefill\cr%
   }
 }$}}
\gdef\Skew(#1:#2){\hbox{$\vcenter
{\mathcode`,="8000\mathcode`|="8000
  \dimen0=\ht\strutbox \advance\dimen0 by \dp\strutbox
  \def\boxbeg{\vbox
    \bgroup\hrule\kern-0.4pt\hbox to\dimen0\bgroup\strut\vrule\hss$}%
  \def\boxend{$\hss\egroup\hrule\egroup}%
  \def,{\boxend\boxbeg}%
  \def|##1:{\boxend\vrule\egroup\nointerlineskip\kern-0.4pt
    \moveright##1\dimen0\hbox\bgroup\boxbeg}%
  \def\\##1\\##2:{\boxend\vrule\egroup\nointerlineskip\kern-0.4pt
    \kern ##1\dimen0\moveright##2\dimen0\hbox\bgroup\boxbeg}%
  \moveright#1\dimen0\hbox\bgroup\boxbeg#2\boxend\vrule\egroup
 }$}}
\title{A non-levi branching rule in terms of Littelmann paths}
\author{Bea Schumann }
\address{Mathematical Institute, University of Cologne, Weyertal 86-90, 50931 Cologne, Germany}
\email{bschumann@math.uni-koeln.de}
\author{Jacinta Torres}
\address{Karlsruhe Institute for Technology, Department of Mathematics, Institute for Algebra and Geometry, Englerstr. 2 Mathebau (20.30), 76131 Karlsruhe, Germany}
\email{jacinta.torres@kit.edu}
\subjclass[2010]{20G05 (primary), 05E05, 05E10 (secondary)}
\begin{document}
\maketitle

\begin{abstract}
We prove a conjecture of Naito-Sagaki about a branching rule for the restriction of irreducible representations of $\mathfrak{sl}(2n,\mathbb{C})$ to $\mathfrak{sp}(2n,\mathbb{C})$. The conjecture is in terms of certain Littelmann paths, with the embedding given by the folding of the type $A_{2n-1}$ Dynkin diagram. So far, the only known non-Levi branching rules in terms of Littelmann paths are the diagonal embeddings of Lie algebras in their product yielding the tensor product multiplicities.
\end{abstract}

\section*{Introduction}

Given a complex simple Lie algebra $\mathfrak{g}$, a finite-dimensional representation $V$ of $\mathfrak{g}$ and a complex reductive subalgebra $\widetilde{\mathfrak{g}}\subset \mathfrak{g}$ we have a natural action of $\widetilde{\mathfrak{g}}$ on $V$ by restricting the action of $\mathfrak{g}$. Under this restriction the property of irreducibility is not preserved in most cases. It is a classical problem in representation theory to determine the multiplicities of irreducible representations of $\widetilde{\mathfrak{g}}$ as direct summands of an irreducible $\mathfrak{g}$-representation $V$ under restriction, called the branching problem. A formula determining these restriction multiplicities is called a branching rule. In this work we prove a new branching rule for the restriction of $\mathfrak{sl}(2n,\mathbb{C})$ to $\mathfrak{sp}(2n,\mathbb{C})$ in terms of Littelmann paths which was conjectured by Naito and Sagaki in \cite{branchconj}.\\

Let $\mathfrak{h} \subset \mathfrak{b} \subset \mathfrak{g}$ be fixed Cartan and Borel subalgebras. Let $\mathfrak{h}^{*}_{\mathbb{R}}$ be the real span of the integral weight lattice. Consider the set $\Pi$ of piecewise linear paths $\pi: [0,1] \rightarrow \mathfrak{h}^{*}_{\mathbb{R}}$ starting at the origin and ending at an integral weight. To each simple root $\alpha$, Littelmann \cite{pathsandrootoperators} assigned root operators $f_{\alpha}$ and $e_{\alpha}$  partially defined on the set $\Pi$. Fix a path $\pi^{+} \in \Pi$ completely contained in the dominant Weyl chamber. Such a path is called \textit{dominant}. The subset $\mathcal{P}(\pi^{+}) \subset \Pi$ obtained by successive application of the root operators to the path $\pi^{+}$ is a model for the simple representation $\li(\lambda)$ of highest weight $\lambda = \pi^{+}(1)$: the sum over the endpoints of all paths in $\mathcal{P}(\lambda)$ is the character of $\li(\lambda)$, the Littlewood-Richardson rule is generalised in a natural way, and it is possible to describe the restriction of representations to Levi subalgebras, simply by considering a subset of the hyperplanes that define the dominant Weyl chamber (\cite{littelmannlrrule}).\\

Consider a reductive subalgebra $\widetilde{\mathfrak{g}}$ with a choice of Cartan subalgebra $\widetilde{\mathfrak{h}}$ such that
$$\widetilde{\mathfrak{h}}\subset \mathfrak{h}.$$
A path $\pi:[0,1] \rightarrow \mathfrak{h}^{*}_{\mathbb{R}}$ may be restricted to the path
$\op{res}(\pi): [0,1] \rightarrow (\widetilde{\mathfrak{h}}_{\mathbb{R}})^{*}$ via $\op{res}(\pi)(t):= \pi(t)|_{\widetilde{\mathfrak{h}}_{\mathbb{R}}}$.

We say that the path $\pi^{+}$ \textit{is adapted to the branching} if

\begin{align*}
\op{res}^{\mathfrak{g}}_{\widetilde{\mathfrak{g}}} (\li(\lambda)) = \underset{\delta \in \op{domres}(\lambda)}{\bigoplus} \widetilde{\li}(\delta(1)),
\end{align*}

\noindent
where $\op{domres}(\lambda)$ is the set of paths $\op{res}(\pi)$ that are dominant, for some choice of simple roots of $\tilde{\mathfrak{g}}$, and for $\pi \in \mathcal{P}(\pi^{+})$. For instance, any dominant path is adapted to any Levi. A path obtained by concatenation of two dominant paths is adapted to $\mathfrak{g} \subset \mathfrak{g} \times \mathfrak{g}$.\\

In the case of $\mathfrak{g} = \mathfrak{sl}(2n,\mathbb{C})$, the highest weight $\lambda$ may be naturally interpreted as a partition. Let $\op{SSYT}(\lambda)$ be the set of semi-standard Young tableaux of shape $\lambda$ in the ordered alphabet $\mathcal{A}_{2n} = \left\{1 < \ldots < 2n\right\}$. This set can be interpreted as a Littelmann path model $\mathcal{P}(\pi^{+}_{SSYT})$ for $\li(\lambda)$, where $\pi^{+}_{SSYT}$ is a path associated to the semi-standard Young tableau of shape $\lambda$ with entries only $i$'s in row $i$. Our main result reads as follows.

\begin{thm}\label{mainbranchingthm}
\label{mainbranchingthmintro}
The path $\pi^{+}_{SSYT}$ is adapted to $\mathfrak{sp}(2n,\mathbb{C}) \subset \mathfrak{sl}(2n,\mathbb{C})$, with the embedding given by the folding automorphism of the Dynkin diagram of type $\A_{2n-1}$. 
\end{thm}

\noindent 
The obtained branching rule can be expressed in terms of tableaux as follows. Given a semistandard Young tableau of shape $\lambda$ of at most $2n$ parts, replace each letter $i > n$ by $\overline{2n - i + 1}$, to produce a new tableau, $\op{res}(\T)$. Read the word of this new tableau from right to left and top to bottom. At each step $j$, let $\mu^{j}_{i}$ equal the number of $i's$ minus the number of $\overline{i}'s$ in the word up to that point. Then $\op{res}(\T)$ belongs to $\op{domres}(\lambda)$ if and only if $\mu_{j} = (\mu^{1}_{j}, \ldots, \mu^{n}_{j})$ is a partition for all steps $j$. \\

In order to prove our main Theorem \ref{mainbranchingthm} we use a branching rule obtained by Sundaram in \cite{sundaram} expressing the branching multiplicities of the restriction from $\mathfrak{sl}(2n,\mathbb{C})$ to $\mathfrak{sp}(2n,\mathbb{C})$ in terms of a subclass of Littlewood-Richardson tableaux, called here Littlewood-Richardson Sundaram tableaux (see Section \ref{skewsection} for the precise definition). The multiplicity of the $\mathfrak{sp}(2n,\mathbb{C})$ irreducible module $\tilde \li(\mu)$ in $\op{res}(\li(\lambda))$ is given by the cardinality $|\op{LRS}(\lambda, \mu)|$, where $\op{LRS(\lambda, \mu)}$ is the set of Littlewood-Richardson Sundaram tableaux of skew shape $\lambda/\mu$. So what we do in our proof is to establish a bijection

\begin{align*}
\op{domres}(\lambda,\mu) \overset{\cong}{\longrightarrow} \op{LRS}(\lambda,\mu).
\end{align*}

For this we use a bijection due to Berele and Sundaram, in spirit analogous to the Robinson-Schensted-Knuth correspondence, between so-called up-down sequences and pairs $(\Q, \li)$, where $\Q$ is a standard tableau of shape $\lambda$ and $\li$ is a Littlewood-Richardson Sundaram tableau.

We associate to an element $\T\in \op{domres}(\lambda,\mu)$ an up-down sequence $\mu_{\T}$ which is then sent to $\li_{\T}$ in the pair $(Q_{\T}, \li_{\T})$ obtained via the bijection of Berele and Sundaram. We show that $\Q_{\T}$ has shape $\lambda$ and depends only on this shape, this implies the injectivity, and a case by case analysis shows that it is also surjective.

Theorem \ref{mainbranchingthm} leads to the natural question of whether the conjecture is true for other path models for $\li(\lambda)$. In contrast to the Levi case there exist path models for which Theorem \ref{mainbranchingthm} is false.

Let $\lambda$ be a stable weight (see Definition \ref{stable}). In this case the set of Littlewood-Richardson Sundaram tableaux of skew shape $\lambda/ \mu$ coincides with the classical set of Littlewood-Richardson tableaux of this skew shape. In Sections \ref{pol1} and \ref{domresviainequalities} we give a bijection between both, the set of paths in $\op{domres}(\lambda)$ with endpoint $\mu$, and the set of Littlewood-Richardson-Sundaram tableau of skew shape $\lambda/\mu$, with lattice points of a convex polytope.

We conclude the paper with several open problems.

\section{Words and Paths}
\label{wordsandpaths}

For a positive integer $m \in \mathbb{Z}_{\geq 1}$, let $\mathfrak{h} \subset \mathfrak{b} \subset \mathfrak{sl}(m, \mathbb{C})$ be the Cartan subalgebra of diagonal matrices, respectively the Borel subalgebra of upper triangular matrices in the special linear Lie algebra of traceless, complex $m \times m$ matrices. Let $\lambda \in \mathfrak{h}^{*}$ be an integral weight that is dominant with respect to this choice. Let $\varepsilon_{i} \in \mathfrak{h}^{*}$ be defined by $\varepsilon_{i}(\op{diag}(a_{1}, \ldots, a_{m-1})) = a_{i}$. We write $\omega_{1}, \ldots, \omega_{m - 1}$, $\omega_{i} = \varepsilon_{1} + \ldots + \varepsilon_{i}$, for the fundamental weights in $\mathfrak{h}^{*}$ and $\mathfrak{h}^{*}_{\mathbb{R}}$ for the real span of the fundamental weights.

To a dominant integral weight $\lambda = a_{1}\omega_{1} + \ldots + a_{m-1}\omega_{m-1}$ is associated a Young diagram of shape the partition $ (a_{1}+ \ldots + a_{m-1}, \ldots , a_{m-1})$ with $a_{k}$ columns of length $k$. We use the same symbol $\lambda$ to denote the dominant weight and the partition. For a partition $\lambda$, we define $l(\lambda)$ to be the length of the longest column in $\lambda$. We say a partition $\lambda$ is of \textit{type $\A_{m-1}$} if the $l(\lambda)\le m-1$. For two partitions $\lambda$, $\mu$, we say \textit{$\mu$ is contained in $\lambda$ ($\mu\subset \lambda$)} if the Young diagram of shape $\mu$ is contained in the Young diagram of shape $\lambda$ when aligned with
respect to their top left corners. 

If $\mathcal{X}$ is a totally ordered alphabet, the set of \textit{semi-standard Young tableaux} of shape $\lambda$ in the alphabet $\mathcal{X}$ is the set of fillings of the Young diagram $\lambda$ with letters of $\mathcal{X}$ such that the entries are strictly increasing along each column, and weakly increasing along each row. A Young tableau is called \textit{standard} if its rows are strictly increasing.

Let $\op{SSYT}(\lambda)$ be the set of semi-standard Young tableaux of shape $\lambda$ with entries in the ordered alphabet $\mathcal{A}_{m} = \{1< \ldots < m\}$ and let $\T\in \op{SSYT}(\lambda)$. The \textit{word} $W(\T)$ of the tableau $\T$ is obtained from it by reading its entries columnwise from right to left. 

To a number/letter $w \in \{1, \ldots , m\}$  we assign the path 

\begin{align*}
\pi_{w} : [0,1] & \rightarrow \mathfrak{h}^{*}_{\mathbb{R}} \\
t & \mapsto t \cdot \varepsilon_{w},
\end{align*}

Following \cite[Section 2.2]{branchconj}, to each semi-standard Young tableau $\T \in \op{SSYT}(\lambda)$ there is an associated path $\pi_{W(\T)}: [0,1] \rightarrow \mathfrak{h}^{*}_{\mathbb{R}}$ given for $W(T)=w_{1}\cdots w_{r}$ by the concatenation 
\noindent
\begin{equation*}\label{eq:pathword}
\pi_{W(T)}: = \pi_{w_{1}} * \cdots * \pi_{w_{r}},
\end{equation*}

where

\begin{equation*}
\pi_{1}*\pi_{2} (t) := \begin{cases} \pi_{1}(2t) &\text{ if } 0\leq t \leq \frac{1}{2} \\ \pi_{1}(1)+\pi_{2}(2t-1) &\text{ if } \frac{1}{2} \leq t \leq 1 \end{cases}.
\end{equation*}

\begin{center}
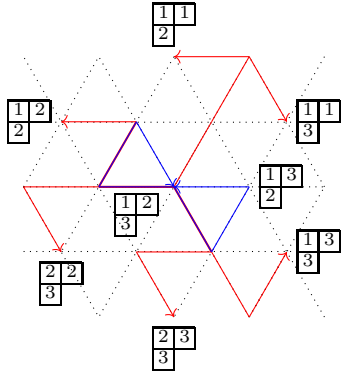
\begin{figure}[h!]\label{pathtabl}
\begin{tikzpicture}
\draw[-, dotted] (-2,0) --(2,0);
\draw[-, dotted] (-2,0.86) --(2,0.86);
\draw[-, dotted] (-2,-0.86) --(2,-0.86);
\draw[-, dotted] (0,0) --(0.5, 0.86602540378);
\draw[-,dotted] (0,0) --(-1, 0);
\draw[-,dotted] (-2,0) --(-1, -1.73205080756);
\draw[-,dotted] (-2,0) --(-1, 1.73205080756);
\draw[-,dotted] (2,0) --(1, 1.73205080756);
\draw[-,dotted] (1,-1.73205080756) --(-1, 1.73205080756);
\draw[-,dotted] (2,-1.73205080756) --(0, 1.73205080756);
\draw[-,dotted] (0,-1.73205080756) --(-2, 1.73205080756);
\draw[-,dotted] (1,-1.73205080756) -- (1.5,-0.86602540378);
\draw[-, dotted] (0,0) --(0.5, -0.86602540378);
\draw[-, dotted] (1,1.73205080756) --(-1, -1.73205080756);
\draw[-, dotted] (2,1.73205080756) --(0, -1.73205080756);
\draw[-, dotted] (1,-1.73205080756) -- (2,0);
\draw[-, dotted] (0,1.73205080756) -- (-1.5,-0.86602540378);
\draw[-, red] (0,0) --(0.5, 0.86602540378);
\draw[-,red] (0.5, 0.86602540378) -- (1, 1.73205080756);
\draw[->, red] (1, 1.73205080756) -- (0,1.73205080756);
\draw[->, red] (1, 1.73205080756) -- (1.5, 0.86602540378);
\draw[-,red, thick](0,0)--(-1,0);
\draw[-,red,thick](-1,0) -- (-0.5, 0.86602540378);
\draw[->,red]  (-0.5, 0.86602540378)--(-1.5,0.86602540378);
\draw[-,red, thick](0,0)--(0.5,-0.86602540378);
\draw[-, red ] (0.5,-0.86602540378) -- (1,-1.73205080756);
\draw[->,red] (1,-1.73205080756) -- (1.5,-0.86602540378);
\draw[-,red](-1, 0) -- (-2,0);
\draw[->,red] (-2,0) -- (-1.5, -0.86602540378);
\draw[-,red] (0.5, -0.86602540378) -- (-0.5, -0.86602540378);
\draw[->,red] (-0.5, -0.86602540378) -- (0, -1.73205080756);
\draw[-, blue] (0,0) -- (-1,0);
\draw[-,blue] (-1,0) -- (-0.5, 0.86602540378);
\draw[->,blue] (-0.5, 0.86602540378) -- (0,0);
\draw[-,blue] (0,0) -- (0.5, -0.86602540378);
\draw[-,blue] (0.5, -0.86602540378) -- (1,0);
\draw[->,blue] (1,0) -- (0,0);
\node[above] at (0,1.73205080756){\tiny{\Skew(0:1,1|0:2) }};
\node[right] at (1.5, 0.86602540378){\tiny{\Skew(0:1,1|0:3)}};
\node[left] at (-1.5, 0.86602540378){\tiny{\Skew(0:1,2|0:2)}};
\node[below] at (-1.5, -0.86602540378){\tiny{\Skew(0:2,2|0:3)}};
\node[below] at (0, -1.73205080756){\tiny{\Skew(0:2,3|0:3)}};
\node[right] at (1.5,-0.86602540378){\tiny{\Skew(0:1,3|0:3)}};
\node[above] at (-0.5,-0.8){\tiny{\Skew(0:1,2|0:3)}};
\node[right] at (1,0){\tiny{\Skew(0:1,3|0:2)}};
\end{tikzpicture}
\caption{Semistandard tableaux of shape $\lambda = (2,1)$ and their paths, for $\slth$.  }
\end{figure}
\end{center}

Now let $m = 2n$ for some $n \in  \mathbb{Z}_{\ge 1}$ and consider the automorphism $\sigma$ of $\mathfrak{sl}(2n,\mathbb{C})$ induced by the folding of the Dynkin diagram of type $\A_{2n-1}$ along the middle vertex.
The set of $\sigma$-fixed points $\mathfrak{sl}(2n,\mathbb{C})^{\sigma}$ is a sub Lie algebra isomorphic to $\mathfrak{sp}(2n,\mathbb{C})$. For an integral weight $\mu=\sum_{i=1}^{2n}a_i\varepsilon_i$ of $\mathfrak{sl}(2n,\mathbb{C})$ we define the weight $$\op{res}{\mu}:=\mu|_{\mathfrak{h}^{\sigma}_{\mathbb{R}}}=\sum_{i=1}^{n}a_i\varepsilon_i - \sum_{i=1}^{n}a_{2n-i+1}\varepsilon_{2n-i+1}$$ which is an integral weight of $\mathfrak{sp}(2n,\mathbb{C})$ with choice of simple roots given by $\{\op{res}(\alpha_i)\mid i \in \{1,\ldots, 2n-1\}\}$.

Let $\p_{SSYT}(\lambda)$ be the Littelmann path model for the simple $\mathfrak{sl}(2n,\mathbb{C})$ representation $\li(\lambda)$ of highest weight $\lambda$ which consists, by definition, of those paths associated to the set of semi-standard Young tableaux in $\op{SSYT}(\lambda)$. Each path $\pi:[0,1] \rightarrow \mathfrak{h}^{*}_{\mathbb{R}}$, may be restricted to a path
$\op{res}(\pi): [0,1] \rightarrow (\mathfrak{h}^{\sigma}_{\mathbb{R}})^{*}$ via $\op{res}(\pi)(t):= \pi(t)|_{\mathfrak{h}^{\sigma}_{\mathbb{R}}}$. The set $\op{domres}(\lambda)$ consists of restricted paths in $\op{res}(\p_{SSYT}(\lambda))$ that are contained in the dominant Weyl chamber of $\mathfrak{sl}(2n,\mathbb{C})^{\sigma}$, i.e. in the positive real span of $\{\op{res}(\omega_1),\ldots, \op{res}(\omega_n)\}$ which are the fundamental weights of $\mathfrak{sp}(2n,\mathbb{C})$ for our choice of simple roots.

\section{The set domres($\lambda$)}
Let $\op{SSYT}_{\mathcal{C}_{n}}$ be the set of all semi-standard Young tableaux of shapes the partitions of type $\A_{2n-1}$ with entries in the ordered alphabet 
$$\mathcal{C}_{n}=\{1<\ldots <n< \overline{n}<\ldots < \overline{1}\}.$$
The word $W(\T)$ of a semi-standard Young tableau $\T$ of this type is obtained from it as in Section \ref{wordsandpaths}. Also, to each word $W = w_{1}\cdots w_{r}$ is attached an integral weight of $\mathfrak{sp}(2n,\mathbb{C})$

$$\mu_{W} = \sum_{i=1}^{r}\varepsilon_{w_{i}}$$

\noindent where $\varepsilon_{\bar i} = -\varepsilon_{i}$. To $T \in \op{SSYT}_{\mathcal{C}_{n}}$ is associated a path $\pi_{W(\T)}$ by \eqref{eq:pathword}, where we set $\pi_{\bar i}=-\pi_i$.

\begin{defn}\label{dompropdef} A semi-standard Young tableau $\T\in \op{SSYT}_{\mathcal{C}_{n}}$ has the \textit{dominance property} if its associated path $\pi_{W(\T)}: [0,1] \rightarrow (\mathfrak{h}^{\sigma}_{\mathbb{R}})^{*}$ is dominant, i.e. it is contained in the dominant Weyl chamber of $\mathfrak{sp}(2n,\mathbb{C})$ (cf. \cite{jaz}).
\end{defn} 
Combinatorially one may break this down as follows. Let $k$ be the length of the word $W(\T)$. Then, reading the word from left to right define sub-words $W_{1}(\T), \ldots, W_{k}(\T) = W(\T)$ by adding one letter at a time. For example, for the word $W(\T) = 321\bar 3$, which has length $k = 4$, we get the following sequence of sub-words: $W_{1}(\T) = 3, W_{2}(\T) = 32, W_{3}(\T) = 321, W_{4}(\T) = 321 \bar 3 = W(\T)$. The semi-standard Young tableau $\T$ has the dominance property if and only if the weights $\mu_{W_{1}}, \ldots, \mu_{W_{k}}$ are all dominant. 

\begin{defn}\label{stable} Let $\mu$ be a partition of type $\A_{2n-1}$. In the case of $l(\mu)\le n$, we call $\mu$ \textit{stable}. 
\end{defn}

Let $\mu$ be a stable partition of type $\A_{2n-1}$. Note that $\op{res}(\mu)$ is a dominant weight for $\mathfrak{sp}(2n,\mathbb{C})$ which corresponds to the same partition as $\mu$.

\begin{defn}
\label{definitionofthesetdomres}
Let $\mu \subset \lambda$ be partitions of type $\A_{2n-1}$ such that $\mu$ is stable. We denote the set of $T \in \op{SSYT}_{\mathcal{C}_{n}}$ of shape $\lambda$ and weight $\mu_{W(\T)} = \mu$ that have the dominance property by $\op{domres}(\lambda, \mu)$.  
\end{defn}

The following fact is made clear in \cite{jaz}, Section 3.

\begin{fact}\label{fact}
The set $\op{domres(\lambda, \mu)}$ corresponds to paths in $\op{domres}(\lambda)$ with endpoint $\op{res}(\mu)$. We abuse notation and do not distinguish between tableaux and paths:

\begin{align*}
\op{domres}(\lambda, \mu) = \left\{\delta \in \op{domres}(\lambda): \delta(1) = \op{res}(\mu)\right\}.
\end{align*}

\end{fact}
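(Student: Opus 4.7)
The plan is to unpack the correspondence between semi-standard Young tableaux in the two alphabets and then verify that the combined ``dominance and cancellation'' conditions match exactly the condition that the restricted path lies in the dominant Weyl chamber of $\mathfrak{sp}(2n,\mathbb{C})$ and ends at $\mu$.

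First I would observe that the ordered alphabets $\{1<\cdots<2n\}$ and $\mathcal{C}_n = \{1<\cdots<n<\bar n<\cdots<\bar 1\}$ are in order-preserving bijection via $i \mapsto i$ for $i\le n$ and $j \mapsto \overline{2n+1-j}$ for $j>n$. This identifies $\op{SSYT}(\lambda)$ with entries in the two alphabets and identifies their reading words letter-for-letter. Since $\mathfrak{h}^{\sigma}$ consists of the diagonal matrices $\op{diag}(a_{1},\ldots,a_{n},-a_{n},\ldots,-a_{1})$, the restriction map satisfies $\varepsilon_{i}|_{\mathfrak{h}^{\sigma}} = \tilde\varepsilon_{i}$ for $i\le n$ and $\varepsilon_{j}|_{\mathfrak{h}^{\sigma}} = -\tilde\varepsilon_{2n+1-j}$ for $j>n$, which under the alphabet bijection is precisely the signed convention $\tilde\varepsilon_{\bar i} = -\tilde\varepsilon_{i}$ used in Section~3. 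Thus the restricted path $\op{res}(\pi_{\T})$ is nothing other than the path $\pi_{w(\T)}$ built in $(\mathfrak{h}^{\sigma}_{\mathbb{R}})^{*}$ from the signed word.

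Next I would read off the endpoint. Since concatenation of the segment paths sums their terminal vectors, one has
\[
\op{res}(\pi_{\T})(1) \;=\; \sum_{w \in w(\T)} \tilde\varepsilon_{w} \;=\; \mu_{w(\T)},
\]
so the endpoint condition $\delta(1)=\mu$ on the path side is exactly the weight condition $\mu_{w(\T)}=\mu$ on the tableau side. After this, the substantive step is to show that $\op{res}(\pi_{\T})$ lies in the $\mathfrak{sp}(2n,\mathbb{C})$-dominant Weyl chamber if and only if every initial-sub-word weight $\mu_{w_{i}(\T)}$ is dominant. The key point is that the dominant Weyl chamber is a closed convex cone and $\pi_{w(\T)}$ is piecewise linear, with each segment a positive multiple of a single $\pm\tilde\varepsilon_{w}$; hence any segment whose two endpoints are dominant lies entirely in the chamber. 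Consequently chamber-containment of the whole path reduces to chamber-containment at its breakpoints, and those breakpoints are precisely the $\mu_{w_{i}(\T)}$.

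Finally, since (as noted in Section~3.2) the dominance property already implies the cancellation property, the tableaux in $\op{domres}(\lambda,\mu)$ are exactly those whose restricted paths are dominant and end at $\mu$, giving the asserted identification. The only mildly delicate point is keeping the two alphabet conventions and the signed/unsigned $\varepsilon$'s consistently aligned; the geometric input is no more than convexity of the Weyl chamber.
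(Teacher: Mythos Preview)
Your argument is correct. In the paper this statement is not proved at all: it is recorded as a Fact with a pointer to \cite{jaz}, Section~3, and no further justification is given here. So there is no ``paper's own proof'' to compare against in this article; you have supplied exactly the verification that the authors outsource. The ingredients you use---the order-preserving relabelling $\mathcal{A}_{2n}\leftrightarrow\mathcal{C}_n$, the computation $\varepsilon_j|_{\mathfrak{h}^{\sigma}}=-\tilde\varepsilon_{2n+1-j}$ for $j>n$, and the reduction of chamber-containment of a piecewise linear path to chamber-containment of its breakpoints via convexity of the closed dominant cone---are the natural ones and match how this identification is typically justified. Your closing remark that dominance implies cancellation (so that Definition~\ref{definitionofthesetdomres} is not imposing anything extra) is also the observation the paper makes in Section~\ref{dominanceproperty}.
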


\begin{ex}
Let $n = 2$ and $\lambda = \omega_{1}+ \omega_{2}$. Then $\op{domres}(\lambda, \mu) = \emptyset$ unless $\mu = \omega_{1} + \omega_{2}$, in which case we have 

$$\op{domres} = \left\{ \Skew(0: \hbox{\tiny{1}}, \hbox{\tiny{1}} | 0: \hbox{\tiny{2}}) \right\}$$

 or $\mu = \omega_{1}$, in which case:
 
 $$\op{domres} = \left\{ \Skew(0: \hbox{\tiny{1}}, \hbox{\tiny{1}} | 0: \hbox{\tiny{$\overline{1}$}}) \right\}$$

\end{ex}

By fact \ref{fact}, Theorem \ref{mainbranchingthm} reads in terms of tableaux as follows.

\begin{thm}\label{mainbranchingthmtab}

The following decomposition holds:

\begin{align*}
\op{res}^{\mathfrak{sl}(2n,\mathbb{C})}_{\mathfrak{sl}(2n,\mathbb{C})^{\sigma}} (\li(\lambda)) = \underset{\T \in \op{domres}(\lambda, \mu)}{\bigoplus} \tilde \li(\op{res}(\mu_{W(\T)}))
\end{align*}

\noindent where $ \tilde \li(\op{res}(\mu_{W(\T)}))$ denotes the simple module for $\mathfrak{sl}(2n,\mathbb{C})^{\sigma}$ of highest weight $\op{res}(\mu_{W(\T)})$, and $\op{res}^{\mathfrak{sl}(2n,\mathbb{C})}_{\mathfrak{sl}(2n,\mathbb{C})^{\sigma}} (\li(\lambda))$ denotes the restriction of $\li(\lambda)$ to $\mathfrak{sl}(2n,\mathbb{C})^{\sigma}$.
\end{thm}

\section{Littlewood-Richardson Sundaram tableaux}
\label{skewsection}
\begin{defn}
Let $\mu \subset \lambda $ be two partitions of type $\A_{2n-1}$. A \textit{ skew tableau $\mathscr{T}$ of skew shape $\lambda / \mu$} is a filling of a Young diagram of shape $\lambda$ leaving the boxes that belong to $\mu \subset \lambda$ blank, with the others having entries in the alphabet $\mathcal{A}_{2n}$, and such that these entries are strictly increasing along the columns and weakly increasing along the row. The word $W(\mathscr{T})$ of $\mathscr{T}$ is obtained just as for semi-standard Young tableaux, reading from right to left and from top to bottom, ignoring the blank boxes. 
\end{defn}

\begin{defn}\label{evdef}
A partition of type $\A_{2n-1}$ is \textit{even} if every column in its corresponding Young diagram has an even number of boxes.
\end{defn}

\begin{ex}
The semi-standard Young tableau $\Skew(0:\mbox{\tiny{1}},\mbox{\tiny{3}}|0:\mbox{\tiny{6}},\mbox{\tiny{10}})$ has shape the even partition $(2,2)$.
\end{ex}

\begin{defn}\label{LRS-tableau-def}
Let $\lambda , \mu, \eta $ be partitions of type $\A_{2n-1}$ such that $\mu \subset \lambda$. A \textit{Littlewood-Richardson} tableau of skew shape $\lambda / \mu$ and weight
$\eta$ is a skew tableau of skew shape $\lambda / \mu$ which has a dominant word of weight $\eta$. The set of all such skew tableaux is denoted by $\op{LR}(\lambda/ \mu, \eta)$.

A Littlewood-Richardson tableau of skew shape $\lambda / \mu$ and weight $\eta$ is called \textit{n-symplectic Sundaram} or just \textit{Sundaram} if $\eta$ is even, $\mu \subset \lambda$ is stable and $2i+1 $ does not appear strictly below row $n+i$ for $i \in \{0, 1, \ldots, \frac{1}{2} l(\eta)\}$). The set of all such tableaux is denoted by $\op{LRS}(\lambda/\mu, \eta)$.
\end{defn}

\begin{ex}
\label{firstexamplensymplectic}
The tableau $\mathscr{L} = \Skew(0:\mbox{},\mbox{\tiny{1}},\mbox{\tiny{1}}|0:\mbox{ },\mbox{\tiny{2}}|0:\mbox{\tiny{2}})$ is  a Littlewood-Richardson tableau of skew shape $\lambda / \mu$ and weight $\eta$ for $\lambda = \omega_{1} + \omega_{2} + \omega_{3}, \mu = \omega_{2},$ and $\eta = 2 \omega_{2}$ and the tableau $\mathscr{T} = \Skew(0: \mbox{ }|0: \mbox{ } |0: \mbox{\tiny{1}})$ is a Littlewood-Richardson tableau of skew shape $\lambda'/ \mu'$ and weight $\eta'$ for $\lambda' = \omega_{3}, \mu' = \omega_{2},$ and $\eta' = \omega_{1}$. Notice that  $\mathscr{L}$ is 2-symplectic Sundaram while $\mathscr{T}$ is not. 
\end{ex}

\begin{defn}\label{LRcoff}
The Littlewood-Richardson coefficient is defined as the number $c^{\lambda}_{\mu, \eta} \in \mathbb{Z}_{\geq 0}$ such that
\begin{align*}
\li(\mu) \otimes \li(\eta) = \underset{\mu \subset \lambda}{\bigoplus} c^{\lambda}_{\mu, \eta} \li(\lambda)
\end{align*}
\noindent
where $\li(\lambda),$ $\li(\mu),$ and $\li(\eta)$ are the corresponding simple representations of \linebreak $\mathfrak{sl}(2n, \mathbb{C})$. 
\end{defn}

\noindent
Theorem \ref{litrich} below is known as the \textit{Littlewood-Richardson rule}. It was first stated in 1934 by Littlewood and Richardson (see \cite{lr}). it was fully proven in the late 1970's by \cite{Sch},
\cite{T1}, \cite{T2}, \cite{Ma}.

\begin{thm}\cite{howelee}
\label{litrich}
The Littlewood-Richardson coefficients are obtained by counting Littlewood-Richardson tableaux: 
\begin{align*}
c^{\lambda}_{\mu, \eta} = |\op{LR}(\lambda/ \mu, \eta)|.
\end{align*}
\end{thm}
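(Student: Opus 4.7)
I would deduce the Littlewood--Richardson rule from Littelmann's generalized tensor product decomposition applied to the path model $\p_{SSYT}(\eta)$, which was already introduced in Section \ref{wordsandpaths}. This keeps the argument inside the combinatorial framework of the paper.

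\textbf{Step 1: Reduction to a tensor product multiplicity.} By definition, $c^{\lambda}_{\mu,\eta}$ is the multiplicity of $\li(\lambda)$ in $\li(\mu)\otimes \li(\eta)$. Fix the highest weight path $\pi^{\mu}:t\mapsto t\mu$ for $\li(\mu)$ in its natural one--element path model. Littelmann's tensor product theorem (cited as \cite{littelmannlrrule}) asserts
\begin{align*}
\li(\mu)\otimes \li(\eta) \;=\; \bigoplus_{\pi\in \p_{SSYT}(\eta),\ \pi^{\mu}*\pi \text{ dominant}} \li\bigl(\mu+\pi(1)\bigr).
\end{align*}
Thus $c^{\lambda}_{\mu,\eta}$ equals the number of semi-standard Young tableaux $T$ of shape $\eta$ with entries in $\mathcal{A}_{2n}$ such that the concatenation $\pi^{\mu}*\pi_{T}$ stays in the dominant Weyl chamber and ends at $\lambda$.

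\textbf{Step 2: Combinatorial translation.} Unpacking the condition via the prescription of Section \ref{wordsandpaths}, the concatenated path $\pi^{\mu}*\pi_{T}$ is dominant if and only if for every initial subword $w_{k}(T)$ of $w(T)$ the weight $\mu + \mu_{w_{k}(T)}$ is dominant; equivalently, reading $w(T)$ from left to right and at each step recording how many times each letter $i\in\{1,\dots,2n\}$ has been seen produces a weakly decreasing sequence, which is exactly the lattice (Yamanouchi) condition encoded by the dominance property of Section \ref{dominanceproperty}, shifted by $\mu$. The endpoint condition $\pi^{\mu}(1)+\pi_{T}(1)=\lambda$ forces the content of $T$ together with $\mu$ to equal $\lambda$ as a partition.

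\textbf{Step 3: Bijection with LR skew tableaux.} I would then exhibit the map
\begin{align*}
\Phi\colon \{T\in \op{SSYT}(\eta):\pi^{\mu}*\pi_{T}\ \text{dominant, ends at }\lambda\}\ \longrightarrow\ \op{LR}(\lambda/\mu,\eta)
\end{align*}
defined as follows: read $w(T)=w_{1}\cdots w_{r}$ from left to right; at step $s$, if $w_{s}=i$, append a new box labelled $i$ to row $i$ of the growing skew shape (initialized at $\mu$). The lattice property of Step 2 guarantees that at every intermediate step the resulting diagram is a legitimate partition containing $\mu$, so $\Phi(T)$ is well-defined, has shape $\lambda/\mu$, weight $\eta$, and its reverse reading word is $w(T)$, hence dominant. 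Column-strictness follows because in $T$ an entry $i$ that sits above $i+1$ in a column contributes an $i+1$ before the matching $i$ in the reading order, which is incompatible with placing the $i+1$-box strictly below the $i$-box in row $i+1$ of $\Phi(T)$. Conversely, given $\mathscr{T}\in \op{LR}(\lambda/\mu,\eta)$, read $w(\mathscr{T})$ and place the $k$-th occurrence of the letter $i$ into the $k$-th box of row $i$ of an $\eta$-shaped diagram; dominance of $w(\mathscr{T})$ guarantees this fills $\eta$ and produces a valid SSYT, giving the inverse of $\Phi$.

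\textbf{Main obstacle.} The delicate point is verifying semi-standardness in both directions of $\Phi$: the row-weak and column-strict conditions for the skew tableau $\Phi(T)$, and the column-strictness of the inverse. Both reduce to a careful tracking of the relative order, inside $w(T)$ (resp.\ $w(\mathscr{T})$), of the $k$-th letter $i$ versus the $k$-th and $(k{-}1)$-st letter $i+1$, using the lattice condition and the columns-are-strictly-increasing condition coming from the tableau structure. Once this compatibility is established, combining Steps 1--3 yields $c^{\lambda}_{\mu,\eta}=|\op{LR}(\lambda/\mu,\eta)|$.
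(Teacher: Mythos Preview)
The paper does not prove Theorem~\ref{litrich}; it is quoted with a citation to \cite{howelee} and used as a black box. Your strategy of deducing it from Littelmann's tensor product rule \cite{littelmannlrrule} applied to $\p_{SSYT}(\eta)$ is perfectly reasonable and well known, so in spirit your Steps~1--2 are fine.

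There is, however, a genuine error in Step~3. Your forward map $\Phi$ reads $w(\T)$ and, whenever $w_{s}=i$, places a box \emph{labelled $i$} in row $i$ of the growing skew shape. With that labelling every box of $\Phi(\T)$ carries its own row index, so the content of $\Phi(\T)$ is the content of $\T$, namely $\lambda-\mu$, and not $\eta$; in particular $\Phi(\T)\notin \op{LR}(\lambda/\mu,\eta)$ unless $\eta=\lambda-\mu$. Worse, $\Phi$ is not injective: any two $\T,\T'\in\op{SSYT}(\eta)$ with the same content produce the identical filling of $\lambda/\mu$. Your description of the inverse (``place the $k$-th occurrence of the letter $i$ into the $k$-th box of row $i$ of an $\eta$-shaped diagram'') is not inverse to this $\Phi$ either, which is a symptom of the same problem. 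The standard fix is to label the new box not by the value $w_{s}=i$ but by the \emph{row of $\T$} in which that letter sits; then the content of $\Phi(\T)$ is the shape of $\T$, i.e.\ $\eta$, and the construction becomes the classical companion-tableau bijection. With that correction the semi-standardness checks you flag as the ``main obstacle'' go through, but as written Step~3 does not define a map into $\op{LR}(\lambda/\mu,\eta)$ at all.
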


\begin{rem}
\label{redundance}
Definition \ref{LRcoff} implies that $c^{\lambda}_{\mu, \eta} = c^{\lambda}_{\eta, \mu}$. 
\end{rem}

We use the notation $c^{\lambda}_{\mu, \eta}(\s) = |\op{LRS(\lambda/ \mu, \eta)}|$. The following theorem was proven by Sundaram in Chapter IV of her PhD thesis \cite{sundaram}. See also Corollary 3.2 of \cite{sundarampaper}. For stable weights (i.e. with columns of at most length $n$) it was proven by Littlewood in \cite{litt} and is known as the Littlewood branching rule. 

\begin{thm} \cite{sundaram}
\label{sundaramstheorem}
Let $\lambda \in \mathfrak{h}^{*}_{\mathbb{R}}$ be a dominant integral weight. Then 

\begin{align*}
\op{res}^{\mathfrak{sl}(2n, \mathbb{C})}_{\mathfrak{sl}(2n, \mathbb{C})^{\sigma}}(\li(\lambda)) = \underset{l(\mu) \leq n}{\underset{\mu \subset \lambda}{\bigoplus}} \N_{\lambda, \mu} \tilde{\li}(\op{res}(\mu))
\end{align*}

\noindent
where 
\begin{align*}
\N_{\lambda, \mu} = \underset{\eta \hbox{ \tiny{even} }}{\sum}c^{\lambda}_{\mu, \eta}(\s). 
\end{align*}
Recall that $\op{res}(\mu) = \mu|_{\mathfrak{h}^{\sigma}_{\mathbb{R}}}$ and that since $l(\mu) \leq n$, $\op{res}(\mu)$ corresponds to the partition $\mu$. 
\end{thm}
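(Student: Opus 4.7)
The plan is to prove the character identity equivalent to the branching rule by constructing a weight-preserving bijection between the relevant sets of tableaux, following the approach in Sundaram's thesis. Under the folding identification $\mathfrak{h}^{\sigma}\subset\mathfrak{h}$ of type $A_{2n-1}\to C_n$, the restriction of characters corresponds to the specialization $x_{n+i}\mapsto x_{n-i+1}^{-1}$, so the theorem is equivalent to the polynomial identity
\begin{align*}
s_{\lambda}(x_1,\ldots,x_n,x_n^{-1},\ldots,x_1^{-1}) \;=\; \sum_{\substack{\mu\subset\lambda \\ l(\mu)\leq n}} \N_{\lambda,\mu}\, sp_{\mu}(x_1,\ldots,x_n),
\end{align*}
where $s_{\lambda}$ and $sp_{\mu}$ are the Schur polynomial and symplectic Schur polynomial, regarded as generating functions for the weights of semi-standard Young tableaux of shape $\lambda$ with entries in $\mathcal{A}_{2n}$ and of King symplectic tableaux of shape $\mu$, respectively.

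The core step is constructing a weight-preserving bijection
\begin{align*}
\Phi \colon \op{SSYT}(\lambda, \mathcal{A}_{2n}) \;\xrightarrow{\;\sim\;}\; \bigsqcup_{\mu,\,\eta}\, \op{KST}(\mu)\,\times\,\op{LRS}(\lambda/\mu,\eta),
\end{align*}
where $\op{KST}(\mu)$ is the set of King tableaux of shape $\mu$ with $l(\mu)\leq n$ and $\eta$ runs through even shapes. After identifying $\{1,\ldots,2n\}$ with the symplectic alphabet $\{1<\bar 1<\cdots<n<\bar n\}$ via $n+i\mapsto\overline{n-i+1}$, I would define $\Phi$ via Berele column insertion: column-insert the letters of $w(\T)$ into a growing tableau one at a time, and whenever the insertion of a $\bar i$ would place it in a column already containing an $i$, extract the matched pair and record its position in an auxiliary skew tableau $\mathcal{L}$ of shape $\lambda/\mu$. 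The surviving tableau $\mathcal{S}$ becomes the King piece, the extraction record becomes the LRS piece, and the weight-preservation (after the specialization $x_{n+i}\mapsto x_{n-i+1}^{-1}$) holds because every cancelled pair contributes $0$ to the specialized weight.

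The main obstacle is verifying three properties of $\Phi$. The first, that $\mathcal{S}$ is a genuine King tableau, is essentially built into Berele's insertion rules. The second, that $\mathcal{L}$ has the structure of a Littlewood-Richardson tableau of even weight $\eta$, uses the fact that the positions of extracted pairs naturally organize into special two-line arrays as in Definition \ref{twolinearray}, and that Theorem \ref{burge} then guarantees the resulting shape is even. The third and most delicate property is the $n$-symplectic condition that $2i+1$ does not appear strictly below row $n+i$ in $\mathcal{L}$; this requires a careful induction on the insertion history tracking the heights of the columns in which cancellations are allowed to occur, and it is where the bulk of the combinatorial work lies. Finally, the inverse $\Phi^{-1}$ is constructed by reverse column insertion guided by $\mathcal{L}$, reconstructing the matched pairs via the inverse Burge correspondence of Theorem \ref{burge} applied to the special two-line arrays encoded in the even shapes underlying the LRS tableau, thereby completing the bijection and yielding the desired character identity upon summing over weights.
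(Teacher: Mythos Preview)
The paper does not actually prove Theorem~\ref{sundaramstheorem}. It is stated there as a background result, with the proof attributed to Chapter~IV of Sundaram's thesis \cite{sundaram} (and, for stable weights, to Littlewood \cite{litt}); the paper then \emph{uses} this theorem as one side of the counting identity that the bijection~(\ref{thebijection}) is meant to establish. So there is no ``paper's own proof'' to compare against.

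Your sketch is a reasonable high-level outline of Sundaram's original argument, but a couple of points would need tightening before it could stand as a proof. First, the alphabet identification: Berele insertion and King tableaux operate on the ordering $1<\bar 1<2<\bar 2<\cdots<n<\bar n$, whereas the identification $n+i\mapsto\overline{n-i+1}$ you wrote lands in the paper's ordering $1<\cdots<n<\bar n<\cdots<\bar 1$; these are genuinely different total orders, and the bijection Sundaram builds does not literally start from an SSYT in $\mathcal{A}_{2n}$ and feed its word into Berele insertion under a relabelling. Rather, she works with up--down tableaux (oscillating sequences of shapes) as the intermediate object and matches them on one side with symplectic tableaux via Berele insertion and on the other side with pairs $(\Q,\mathscr{L})$ via the $\Q$-symbol/Burge machinery that the paper summarises around Theorem~\ref{symplecticrsk}. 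Second, the passage from ``extraction record'' to ``LR tableau of even weight satisfying the $n$-symplectic condition'' is where essentially all the content lies, and invoking Theorem~\ref{burge} alone does not give semistandardness or dominance of the word of $\mathscr{L}$; that is the part of Sundaram's Theorems~8.11 and~9.4 that requires real work. Your outline acknowledges this is ``the bulk of the combinatorial work'' but does not indicate how the induction is structured, so as written it is a plan rather than a proof.
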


\section{Symplectic RSK correspondence}
\label{arrays}

For the comfort of the reader we recall some facts about the combinatorics of two-line arrays which we need in the next section. We start with the definition of (column) bumping. Let $\T$ be a semi-standard tableau in any totally ordered alphabet, and $l$ a letter in this alphabet. A new tableaux, denoted by $l \rightarrow \T$ is obtained by column bumping $l$ into $\T$ as follows. If all the entries in the first column of $\T$ are smaller or equal to $l$, place $l$ at the bottom of this column. This is the new tableau. Otherwise, replace by $l$ the smallest entry which is greater than $l$, let us call this entry $l'$. Now consider the second column of $\T$, and proceed with $l'$ as with $l$ and the first column. If there are no more columns available, create a new one. Now let $d$ b an entry in a semi-standard tableau $\T$ which is an inner corner of its Young diagram, i.e. it is an entry in the last box in one of its rows. A new tableau $\T'$ is created by (column) bumping out  $d$ from $\T$ as follows. In the column previous to the one where $d$ is, find the largest entry $d'$ that is smaller than or equal to $d$, and replace it with $d$. Now look at the next column, and proceed with $d'$ as with $d$ at the beginning of the procedure. In the end, some entry $d^{end}$  from the first column will have been removed from $\T$ The tableau $\T'$ has the property that $d^{end}\rightarrow(\T') = \T$, i.e., column bumping $d^{end}$ into $\T'$ will give back the initial tableau $\T$.

\subsection{Two-line arrays and even partitions}

\begin{defn}
\label{twolinearray}
A \textit{special two-line array} $\L$ is a two-line array of pairwise distinct positive integers
\begin{align*}
\L=\begin{bmatrix}
j_{1} & \cdots & j_{r} \\
i_{1} & \cdots & i_{r}
\end{bmatrix}
\end{align*} 

\noindent
such that 

\begin{itemize}
\label{1ta}
\item[1.] $j_{1} < \ldots < j_{r}$
\label{2ta}
\item[2.] $j_{s} > i_{s}, s \in \{1, \ldots, r\}.$
\end{itemize}

\end{defn}

Consider a special two-line array $\L$ as above and let $s_{1}, \ldots, s_{r}$ be the re-ordering of the index set $\{1, \ldots, r\}$ such that $i_{s_{1}}< \ldots < i_{s_{r}}$. We can obtain a standard Young tableau $\E(\L)$ from our array by column bumping its entries: 

\begin{equation*}\label{arraybump}
\E(\L):=j_{s_{1}} \rightarrow \cdots \rightarrow j_{s_{r}} \rightarrow i_{1} \rightarrow \cdots \rightarrow i_{r} \rightarrow \emptyset .
\end{equation*}

\begin{ex}
\label{bumping}
Consider the two-line array 
$$\L=\begin{bmatrix}6 & 10 \\ 1 & 3 \end{bmatrix}$$

\noindent
We have

\begin{align*} \E_{\L} = 
6 \rightarrow 10 \rightarrow 1 \rightarrow 3 \rightarrow \emptyset =\Skew(0:\mbox{\tiny{1}},\mbox{\tiny{3}}|0:\mbox{\tiny{6}},\mbox{\tiny{10}}).
\end{align*}

\end{ex}

Recall the definition of an even partition from Definition \ref{evdef}. The following theorem follows from the Burge correspondence \cite{burge} (see also Theorem 3.31 in \cite{sundaram}) and Lemma 10.7 in Sundaram's thesis \cite{sundaram}.
\begin{thm}\cite{sundaram}\label{even}
\label{burge}
The assignment $\L \mapsto \E_{\L}$ above defines a bijection between special two-line arrays and standard Young tableaux of even shape.
\end{thm}

\subsection{Up-down tableaux and Q-symbols}
We now recall the definition of up-down sequences of partitions, which, for us, replace words in the classical RSK correspondence. 

\begin{defn} An \textit{up-down tableau} of length $k$ is a $k$-sequence of partitions
$$S^k_{\mu}=(\emptyset=\mu_0,\mu_1,\ldots,\mu_k=\mu)$$
of type $\A_{n}$ such that $\mu_j$ and $\mu_{j+1}$ differ by exactly one box for all $j\in \{0,1,\ldots,k-1\}$. We call $\mu$ the shape of the up-down sequence $S^k_{\mu}$.
\end{defn}

Note that for every up-down sequence $S^k_{\mu}$, the partition $\mu_1$ consists of exactly one box.

\begin{defn} \label{partialq}Let $S=S^k_{\mu}$ be an up-down sequence. We associate to $S$ a sequence of standard tableaux which are fillings of $\mu_1,\ldots,\mu_k$ with entries in the alphabet
$$\{1 < 2 < \ldots < k\}$$
successively as follows. We fill the unique box of $\mu_1$ with the entry $1$ to obtain a tableau $\Q_{S}^p(1)$. Assume that the Young diagrams $\mu_1,\mu_2,\ldots,\mu_j$ have already been given a filling such that we have a sequence of tableaux $\Q_{S}^p(1),\ldots, \Q_{S}^p(j)$. If $\mu_{j+1}$ is obtained from $\mu_j$ by adding a box, $\Q_{S}^p(j+1)$ is obtained from $\Q_{S}^p(j)$ by filling this box with the letter $``j+1"$. If $\mu_{j+1}$ is obtained from $\mu_{j}$ by removing a box, $\Q_{S}^p(j+1)$ is obtained from $\Q_{S}^p(j)$ by column-bumping the entry lying in this box out of $\Q_{S}^p(j)$. In other words $\Q_{S}^p(j+1)$ is obtained from $\Q_{S}^p(j)$ by removing the left-most (or smallest) entry (which we call $r_{j}$) from the row in which the removed box lies, and shifting everything else to the left. 

We call the tableau $\Q_{S}^p(j)$ the \textit{partial $\Q$-symbol of $S$ at step $j$}. The partial $\Q$-symbol of $S$ at step $k$ is called the \textit{partial $\Q$-symbol of $S$} and is denoted by $\Q^p_S$.
\end{defn}

\begin{ex}\label{run} 
Let $k=10, n\geq 3$, $\mu= \Skew(0:, ,|0: , |0: )$ and $S=S^k_{\mu}=$
\begin{align*} & \Biggl(  \emptyset,\quad  \Skew(0:) ,\quad \Skew(0:,) , \quad \Skew(0:, |0:) , \quad \Skew(0:, ,|0:) , \Skew(0:, ,|0: , ),  \\ 
 &   \quad \Skew(0:, |0: , ) , \quad \Skew(0:, ,|0: , ) , \quad \Skew(0:, ,|0: , ,) , \quad \Skew(0:, ,|0: , , |0: )  , \quad  \Skew(0:, ,|0: , |0: ) \quad \Biggr).
\end{align*}
Table 1 shows the corresponding partial $\Q$-symbols at each step. 

\begin{table}[!htb]
\caption{}
    \begin{minipage}{.5\linewidth}

      \centering
\begin{tabular}{ c | c  }

$    j$ & $    \Q_{S}^p(j)$ \\ \hline
& \\
$    1$ & $    \Skew(0:\mbox{\tiny{1}})$\\ & \\
$    2$ & $    \Skew(0:\mbox{\tiny{1}},\mbox{\tiny{2}})$ \\ & \\
$    3$ & $    \Skew(0:\mbox{\tiny{1}},\mbox{\tiny{2}},\mbox{\tiny{3}})$ \\ & \\ 
$    4$ & $    \Skew(0:\mbox{\tiny{1}},\mbox{\tiny{2}},\mbox{\tiny{4}}|0:\mbox{\tiny{3}})$ \\ & \\ 
$    5$ &  $    \Skew(0:\mbox{\tiny{1}},\mbox{\tiny{2}},\mbox{\tiny{4}}|0:\mbox{\tiny{3}},\mbox{\tiny{5}})$ \\ & \\
$    6$ &  $    \Skew(0:\mbox{\tiny{2}},\mbox{\tiny{4}}|0:\mbox{\tiny{3}},\mbox{\tiny{5}}) $ \\ & \\ 
$    7$ & $    \Skew(0:\mbox{\tiny{2}},\mbox{\tiny{4}}, \mbox{\tiny{7}}|0:\mbox{\tiny{3}},\mbox{\tiny{5}}) $ \\ & \\ 
$    8 $& $    \Skew(0:\mbox{\tiny{2}},\mbox{\tiny{4}}, \mbox{\tiny{7}}|0:\mbox{\tiny{3}},\mbox{\tiny{5}},\mbox{\tiny{8}})$ \\ & \\ 
$    9$ & $    \Skew(0:\mbox{\tiny{2}},\mbox{\tiny{4}}, \mbox{\tiny{7}}|0:\mbox{\tiny{3}},\mbox{\tiny{5}},\mbox{\tiny{8}}|0:\mbox{\tiny{9}})$ \\ & \\ 
$    10$ & $    \Skew(0:\mbox{\tiny{2}},\mbox{\tiny{4}}, \mbox{\tiny{7}}|0:\mbox{\tiny{5}},\mbox{\tiny{8}}|0:\mbox{\tiny{9}})$ .   \\ 
\end{tabular}
    \end{minipage} 
\end{table}
\end{ex}

\begin{defn}\label{twolinarrdef} Let $S=S^k_{\mu}$ be an up-down sequence. We associate to $S$ an even partition as follows. Every time $\mu_{j+1}$ is obtained from $\mu_j$ by removing a box, we save the step $j$ together with the entry $r_{j}$ of $\Q_{S}^p(j)$ in a special two-line array as $\begin{bmatrix}j \\ r_{j} \end{bmatrix}$ and concatenate the two-line arrays obtained this way, with the first corresponding to the smallest $j$ we saved. In the end we get a special two-line array which we denote by $\L(S)$. We denote the even partition $\E_{\L(S)}$ obtained by Theorem \ref{even} by $\E_{S}.$ With notation as in Section \ref{arrays} for $\L(S)$, we have by \eqref{arraybump} that: 
 
 \begin{align*}
 \E_{S} = j_{s_{1}} \rightarrow \cdots \rightarrow j_{s_{r}} \rightarrow i_{1} \rightarrow \cdots \rightarrow i_{r} \rightarrow \emptyset .
 \end{align*}

\end{defn}

\begin{ex}\label{run2} In Example \ref{run} there are two steps where a box is removed, namely step $6$ where the entry $1$ is removed from $\Q_{S}^p(5)$ to obtain $\Q_{S}^p(6)$ and in step $10$ where the entry $3$ is removed from $\Q_{S}^p(9)$ to obtain $\Q_{S}^p(10)$.

Hence, we concatenate $\begin{bmatrix}6 \\ 1 \end{bmatrix}$ with $\begin{bmatrix} 10 \\ 3 \end{bmatrix}$  to deduce 
$$\L(S)= \begin{bmatrix} 6 & 10 \\ 1 & 3 \end{bmatrix}.$$

Using \eqref{arraybump}, we obtain (compare with Example \ref{bumping})
$$ \E_{S}=\Skew(0:\mbox{\tiny{1}},\mbox{\tiny{3}}|0:\mbox{\tiny{6}},\mbox{\tiny{10}}) \ . $$

\end{ex}

\begin{defn}\label{finalqdef} Let $S=S^k_{\mu}$ be an up-down sequence. We associate a standard Young tableau $\Q_S$ to $S$ by column bumping the entries of $\E_{S}$ into $\Q^{p}_{S}$ as follows:

\begin{align*}
\Q_{S} :=  j_{s_{1}} \rightarrow \cdots \rightarrow j_{s_{r}} \rightarrow i_{1} \rightarrow \cdots \rightarrow i_{r} \rightarrow \Q^{p}_{S} . 
\end{align*}
We call $\Q_{S}$ the \textit{final $Q$-symbol of $S$}.
\end{defn}

\begin{ex}\label{run3} For $S$ as in Example \ref{run} we have, using the calculations of Examples \ref{run} and  \ref{run2}:

$$\Q_S =6 \rightarrow 10 \rightarrow 1 \rightarrow 3 \rightarrow \Skew(0:\mbox{\tiny{2}},\mbox{\tiny{4}}, \mbox{\tiny{7}}|0:\mbox{\tiny{5}},\mbox{\tiny{8}}|0:\mbox{\tiny{9}})= \Skew(0:\mbox{\tiny{1}},\mbox{\tiny{2}}, \mbox{\tiny{4}}, \mbox{\tiny{7}}|0:\mbox{\tiny{3}},\mbox{\tiny{5}}, \mbox{\tiny{8}}|0: \mbox{\tiny{6}}, \mbox{\tiny{9}}|0: \mbox{\tiny{10}}).$$
\end{ex}

\begin{defn}\label{phi1} Let $S=S^k_{\mu}$ be an up-down sequence and $\lambda$ the shape of $\Q_S$ and $\nu$ the shape of $\E_{S}$. We associate a skew tableau $\tilde{\phi}(S)$ of skew shape $\lambda/ \mu$ and weight $\nu$ as follows. For each entry $j$ in $\E_{S}$, let $r(j)$ be the row to which it belongs (in $\E_{S}$). Write this number in the skew shape $\lambda / \mu$ in the row of $\Q_{S}$ where $j$ lies. 
\end{defn}

\begin{ex}\label{run4} For $S$ as in Example \ref{run} we have, using the calculations of Examples \ref{run}, \ref{run2}, \ref{run3}, that
$$\tilde{\phi}(S)=\Skew(0: , , , \mbox{\tiny{1}}|0: , , \mbox{\tiny{1}}|0: , \mbox{\tiny{2}}|0: \mbox{\tiny{2}}).$$ 
\end{ex}

Note that the tableau $\tilde{\phi}(S)$ produced in Example \ref{run4} is indeed a  Littlewood Richardson Sundaram tableau $\tilde{\phi}(S) \in \op{LRS}(\lambda/ \mu, \eta)$. This is always the case as shown in the proof of Theorem 8.11 in \cite{sundaram} (and Theorem 9.4). In fact, the following theorem holds (it is stated and proven in Theorems 8.14 and 9.4 of \cite{sundaram}). 

\begin{thm}
\label{symplecticrsk}
The correspondence 
$$S \longleftrightarrow (\Q_S,\tilde{\phi}(S))$$
is a bijection between up-down tableaux $S=S^k_{\mu}$
of length $k$ and shape $\mu$ and pairs $(\Q, \mathscr{L})$, where $\Q$ is a standard Young tableau of shape $\lambda$ with entries precisely the elements of the set $\{1, \ldots, k\}$, and $\mathscr{L}$ is a Littlewood-Richardson Sundaram tableau of skew shape $\lambda / \mu$ and even weight $\eta$ which is the shape of $\E_S$.
\end{thm}

\section{The bijection}

By Theorem \ref{sundaramstheorem}, a proof of Theorem \ref{mainbranchingthmtab} (and, equivalenty, a proof of Theorem \ref{mainbranchingthm}) would be established by the existence of a bijection

\begin{align}
\label{thebijection}
\op{domres}(\lambda, \mu) \overset{1:1}{\longleftrightarrow} \underset{\eta \hbox{ \tiny{even} }}{\underset{\mu \subseteq \lambda;}{\bigcup}} \op{LRS}(\lambda/\mu, \eta).
\end{align}
\noindent
 
\begin{defn}
\label{updownsequence}
Let $\T \in \op{domres}(\lambda, \mu)$. We associate an up-down sequence $S(\T)$ of weight $\mu$ to $\T$ as the sequence of weights of the vertices of the path $\pi_{W(\T)}$ corresponding to it. 

In other words, consider the word $W(\T)$ and start reading it from left to right. We produce a sequence of partitions of length the length of $W(\T)$ as we read $W(\T)$ by adding a box in row $i$ whenever there is an $i$ in $W(\T)$ and by removing a box from row $i$ whenever there is an $\bar i$. The last partition in the sequence is the partition associated to the dominant weight $\mu$.  
\end{defn}

\begin{ex}
\label{sequenceofshapes}
In Example \ref{guidingexample} the up-down sequence associated to each \linebreak $\T_{i} \in \op{domres}(\lambda,\mu)$ is displayed, for $\lambda = \omega_{1}+\omega_{2}+\omega_{3}+\omega_{4}$ and $\mu = \omega_{1}+\omega_{2}+\omega_{3}$. 
\end{ex}

\begin{defn} 
\label{qlambda}
Let $\lambda$ be the Young diagram associated to a partition. We define the standard tableau $\Q_{\lambda}$ associated to $\lambda$ by numbering each box of $\lambda$ in the order of our word reading, and reordering each row so that the resulting tableau is in fact standard. The standard tableau $\Q_{\lambda}$ defined in this way is precisely the $\Q$-symbol associated to the word of any semi-standard tableau of shape $\lambda$ in the classical RSK correspondence. 
\end{defn}

\begin{ex}
\label{exampleqlambda} Let $\lambda = (4,3,2,1)$. Then 
\begin{align*}
\Q_{\lambda} = \Skew(0:\mbox{\tiny{1}},\mbox{\tiny{2}}, \mbox{\tiny{4}}, \mbox{\tiny{7}}|0:\mbox{\tiny{3}},\mbox{\tiny{5}}, \mbox{\tiny{8}}|0: \mbox{\tiny{6}}, \mbox{\tiny{9}}|0: \mbox{\tiny{10}}).
\end{align*}
\end{ex}

\begin{prop}
\label{importantprop}
Let $\T \in \op{domres}(\lambda, \mu)$. Then its final $\Q$-symbol $\Q_{S(\T)}$ is equal to $\Q_{\lambda}$. In particular, it has shape $\lambda$. Moreover, any two elements in $\op{domres}(\lambda, \mu)$ have the same partial $\Q$-symbol. 
\end{prop}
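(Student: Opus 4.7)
The plan is to prove both parts of Proposition~\ref{importantprop} together by induction on $k = |\lambda|$, the length of $w(\T)$. The base case $k = 0$ is immediate: $\T$, all partial $\Q$-symbols, $\L_\T$, $\E_\T$ and $\Q_\T$ are empty, and the shape claim and uniqueness hold vacuously.

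A key structural observation driving the inductive step is that the dominance property forces the first letter $w_1$ of every $\T \in \op{domres}(\lambda, \mu)$ to equal the letter $1$: the first partial weight is $\mu_{w_1} = \varepsilon_{w_1}$, and among the letters in the symplectic alphabet only $w_1 = 1$ produces a dominant weight. Hence every such $\T$ begins by adding a box to row $1$, and the step-$1$ partial $\Q$-symbol is always $\Young(1|)$, independent of the choice of $\T$. This canonicality of the first step is the seed of the uniqueness assertion.

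For the inductive step I would peel off this first letter to form a reduced tableau $\T^\flat$ whose word is $w_2 \cdots w_k$. When $\lambda_1 > \lambda_2$, the entry corresponding to $w_1$ sits at the removable corner $(1, \lambda_1)$ and $\T^\flat$ has shape $\lambda^\flat = (\lambda_1 - 1, \lambda_2, \ldots)$. When $\lambda_1 = \lambda_2$, a single-letter peel does not yield a valid SSYT, and one must instead peel off the entire maximal initial run of letters equal to $1$ (whose length is forced by $\lambda$ together with dominance) and reduce to a shape with an entire column stripped off. In either case, one verifies that $\T^\flat$ again lies in some $\op{domres}(\lambda^\flat, \mu^\flat)$ after a translation of the starting point, the inductive hypothesis applies to give $\Q_{\T^\flat}$ of shape $\lambda^\flat$ (independent of $\T^\flat$), and then $\Q_\T$ is reconstructed from $\Q_{\T^\flat}$ by a controlled re-insertion of the initial labelled box(es) and a uniform relabelling of all entries. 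The shape increment precisely recovers $\lambda$.

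The main obstacle lies in two places. First, one must check that the reduction $\T \mapsto \T^\flat$ preserves both dominance and (the reader-beware, leftward) cancellation property; this is where the interplay between the SSYT axioms and the dominance constraints has to be exploited. Second, when we re-insert the initial segment, we must track how the Burge-like column bumping of $\L_\T$ interacts with the enlarged $\Q^p_\T$; this requires a careful lemma comparing $\L_\T$ with $\L_{\T^\flat}$ and showing that the bumping sequences yield compatible outputs. Small-case computations (e.g.\ the examples $\T = \Skew(0:\tiny{1},\tiny{1}|0:\tiny{\bar 1})$ for $\lambda = \omega_1 + \omega_2$, $\mu = \tilde\omega_1$, and $\T = \Skew(0:\tiny{1},\tiny{1}|0:\tiny{2},\tiny{\bar 1})$ for $\lambda = 2\omega_2$, $\mu = \tilde\omega_2$) strongly suggest the explicit normal form: $\Q_\T$ is the \emph{row super-standard} SYT of shape $\lambda$, whose entries read $1, 2, \ldots, \lambda_1$ across row $1$, then $\lambda_1+1, \ldots, \lambda_1+\lambda_2$ across row $2$, and so on. If this normal form can be verified directly (for instance by induction on $k$, tracking how each column insertion into the current tableau preserves row super-standardness), then both claims of Proposition~\ref{importantprop} follow at once, bypassing much of the inductive bookkeeping.
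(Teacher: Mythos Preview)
Your proposal has a concrete error: the conjectured normal form is wrong. The final $\Q$-symbol is \emph{not} the row super-standard tableau of shape $\lambda$. Your own second test case already shows this. For $\lambda = 2\omega_2 = (2,2)$ and $\T$ with rows $(1,1)$ and $(2,\bar 1)$, the word (read column by column, right to left, top to bottom) is $1,\bar 1,1,2$; running the algorithm gives $\Q^{p}_{\T}$ with entries $3,4$ in a column, $\L_{\T} = \begin{bmatrix}2\\1\end{bmatrix}$, and
\[
\Q_{\T} \;=\; 2 \rightarrow 1 \rightarrow \Q^{p}_{\T}
\;=\;
\Skew(0:\mbox{\tiny{1}},\mbox{\tiny{3}}|0:\mbox{\tiny{2}},\mbox{\tiny{4}}),
\]
which is not the row super-standard filling $\Skew(0:\mbox{\tiny{1}},\mbox{\tiny{2}}|0:\mbox{\tiny{3}},\mbox{\tiny{4}})$. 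Example~\ref{guidingexample} in the paper makes the same point for $\lambda = \omega_1+\omega_2+\omega_3+\omega_4$: there $\Q_{\T}$ has first row $1,2,4,7$, not $1,2,3,4$. The correct description is that $\Q_{\T}$ is the \emph{column-reading} standard tableau of shape $\lambda$: label the boxes $1,2,\ldots$ as you read the columns of $\lambda$ from right to left, top to bottom within each column (this is exactly the tableau constructed in the surjectivity argument, Proposition~\ref{surjectivity}). Your first test case $(2,1)$ happens to be one of the few shapes where the two fillings coincide, which is presumably what misled you.

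Your inductive scheme also has a gap in its case analysis. You assert that when $\lambda_1 = \lambda_2$ the maximal initial run of $1$'s has length ``forced by $\lambda$ together with dominance'', allowing you to strip an entire column. But for $\T$ with rows $(1,1)$ and $(2,2)$ (shape $(2,2)$, word $1,2,1,2$), the initial run of $1$'s has length one, so your peel removes a single box from a length-two column and does not produce a valid shape. More generally, the rightmost column of an element of $\op{domres}(\lambda,\mu)$ can contain any strictly increasing sequence starting with $1$, so neither ``peel one letter'' nor ``peel the $1$-run'' reduces cleanly. Peeling the entire rightmost column is the natural fix, but then you must show that the truncated word is again dominant from the origin, and you give no argument for this.

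The paper's proof avoids induction entirely. It argues directly that in the column-insertion
\[
\Q_{\T} = j_{s_1}\rightarrow\cdots\rightarrow j_{s_r}\rightarrow i_1\rightarrow\cdots\rightarrow i_r\rightarrow \Q^{p}_{\T},
\]
every inserted entry lands back in the row of $\T$ at which the corresponding step of $w(\T)$ occurred. For the $i$'s this is essentially the definition of column bumping; for the $j$'s one uses the key inequality $i_s > i_t$ whenever $j_s < j_t$ lie in the same column of $\T$ (forced by semi-standardness in the alphabet $1<\cdots<n<\bar n<\cdots<\bar 1$), together with an inductive check that no smaller entry is already sitting in the target row. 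This immediately gives $\op{shape}(\Q_{\T}) = \lambda$. Uniqueness of $\Q^{p}_{\T}$ is obtained by a separate argument showing that the \emph{set of entries} surviving in the partial $\Q$-symbol depends only on $\lambda$ and $\mu$ (via the ``deletions'' relative to the stable part $\lambda_s$), and since each entry is pinned to a specific row, the tableau itself is determined.
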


Consequently, 

\begin{equation*}
T \mapsto \tilde{\phi}(S(\T)),
\end{equation*}

with $\tilde{\phi}(S(\T))$ defined in Definition \ref{phi1}, is a map from $\op{domres}(\lambda, \mu)$ to $$\underset{\eta \hbox{ \tiny{even} }}{\underset{\mu \subseteq \lambda;}{\bigcup}} \op{LRS}(\lambda/\mu, \eta)$$ by Theorem \ref{symplecticrsk}. We prove that this is indeed a bijection establishing \eqref{thebijection}.

\begin{thm}\label{bijection} The map
\begin{align*} \phi: \op{domres}(\lambda, \mu) & \rightarrow \underset{\eta \hbox{ \tiny{even} }}{\underset{\mu \subseteq \lambda;}{\bigcup}} \op{LRS}(\lambda/\mu, \eta) \\
T & \mapsto \tilde{\phi}(S(T))
\end{align*}
is a bijection.
\end{thm}

Before proving Proposition \ref{importantprop} and Theorem \ref{bijection}, let us consider an example.

\begin{ex}
\label{guidingexample}
Let $n \geq 3, \lambda = \omega_{1}+\omega_{2}+\omega_{3}+\omega_{4}$ and $\mu = \omega_{1}+\omega_{2}+\omega_{3}$. Then $\op{domres}(\lambda, \mu)$ consists of the three elements $\T_{1}, \T_{2},$ and $\T_{3}$ below:

$${ \T_{1} = \Skew(0:\mbox{\tiny{1}},\mbox{\tiny{1}},\mbox{\tiny{1}},\mbox{\tiny{1}}|0:\mbox{\tiny{2}},\mbox{\tiny{2}},\mbox{\tiny{2}}|0:\mbox{\tiny{3}},\mbox{\tiny{$\overline{1}$}}|0:\mbox{\tiny{$\overline{2}$}})} \qquad \T_{2} = \Skew(0:\mbox{\tiny{1}},\mbox{\tiny{1}},\mbox{\tiny{1}},\mbox{\tiny{1}}|0:\mbox{\tiny{2}},\mbox{\tiny{2}},\mbox{\tiny{$\overline{1}$}}|0:\mbox{\tiny{3}}, \mbox{\tiny{3}}|0:\mbox{\tiny{$\overline{3}$}}) \qquad \T_{3} = \Skew(0:\mbox{\tiny{1}},\mbox{\tiny{1}},\mbox{\tiny{1}},\mbox{\tiny{1}}|0:\mbox{\tiny{2}},\mbox{\tiny{2}},\mbox{\tiny{2}}|0:\mbox{\tiny{3}}, \mbox{\tiny{$\overline{2}$}}|0:\mbox{\tiny{$\overline{1}$}}).$$

The associated up-down sequences $\op{S}(\T_{1}), \op{S}(\T_{2}), \op{S}(\T_{3})$ look, by Definition \ref{updownsequence}, as follows:
\begin{align*}
\op{S}(\T_{1})&= \left(
\Skew(0:), \quad \Skew(0:,), \quad \Skew(0:, |0:), \quad    \Skew(0:, ,|0:), \quad  \Skew(0:, ,|0: , ), \right.\\
 &\left.  \quad \Skew(0:, |0: , ),  \quad \Skew(0:, ,|0: , ), \quad   \Skew(0:, ,|0: , ,), \quad \Skew(0:, ,|0: , , |0: ), \quad  \Skew(0:, ,|0: , |0: )\right) \\
 \op{S}(\T_{2}) &=  
 \left(\Skew(0:) ,\quad \Skew(0:,) ,\quad \Skew(0:) ,\quad \Skew(0:,) ,\quad  \Skew(0:, |0:), \right. \\
& \left. \quad \Skew(0:, |0: |0: ) ,\quad \Skew(0:, , |0: |0: ) ,\quad \Skew(0:, ,|0: , |0:) ,\quad \Skew(0:, ,|0: , |0: , )  ,\quad  \Skew(0:, ,|0: , |0: ) \right) \\
 \op{S}(\T_{3}) &= 
\left( \Skew(0:) ,\quad \Skew(0:,) ,\quad \Skew(0:, |0:) ,\quad \Skew(0:, ,|0:) ,\quad \Skew(0:, ,|0: , ), \right. \\
& \left. \quad \Skew(0:, ,|0: ) ,\quad \Skew(0:, , ,|0:  ) ,\quad \Skew(0:, , ,|0: ,) ,\quad \Skew(0:, , ,|0: , |0: )  ,\quad  \Skew(0:, ,|0: , |0: )\right).
 \end{align*}

In Table 2 we give the steps of the word reading in the leftmost column and the respective partial $Q$-symbol (see Definition \ref{partialq}) at each of these steps in the other columns.

\begin{table}[!htb] \label{table}
\caption{ The partial $\Q$-symbols. }
\begin{minipage}{1\linewidth}

      \centering
\begin{tabular}{ c | c  | c  | c }

$    j$ & $    \Q_{S(T_1)}^p(j)$ & $    \Q_{S(T_2)}^p(j)$ & $    \Q_{S(T_3)}^p(j)$ \\ 
& & & \\
$1$ & $    \Skew(0:\mbox{\tiny{1}})$ & 
$    \Skew(0:\mbox{\tiny{1}})$ & $    \Skew(0:\mbox{\tiny{1}})$ \\ [0.3 in]  
$    2$ & $    \Skew(0:\mbox{\tiny{1}},\mbox{\tiny{2}})$ & $   \Skew(0:\mbox{\tiny{1}},\mbox{\tiny{2}})$ & $    \Skew(0:\mbox{\tiny{1}},\mbox{\tiny{2}})$  \\ [0.3 in]  
$    3$ & $    \Skew(0:\mbox{\tiny{1}},\mbox{\tiny{2}}|0:\mbox{\tiny{3}})$ & $\Skew(0:\mbox{\tiny{2}})$ & 
$    \Skew(0:\mbox{\tiny{1}},\mbox{\tiny{2}}|0:\mbox{\tiny{3}})$
\\  [0.3 in] 
$    4$ & $    \Skew(0:\mbox{\tiny{1}},\mbox{\tiny{2}},\mbox{\tiny{4}}|0:\mbox{\tiny{3}})$ & $    \Skew(0:\mbox{\tiny{2}},\mbox{\tiny{4}})$ & $   \Skew(0:\mbox{\tiny{1}},\mbox{\tiny{2}},\mbox{\tiny{4}}|0:\mbox{\tiny{3}})$ \\ [0.3in] 
$    5$ & $    \Skew(0:\mbox{\tiny{1}},\mbox{\tiny{2}},\mbox{\tiny{4}}|0:\mbox{\tiny{3}},\mbox{\tiny{5}})$ & $    \Skew(0:\mbox{\tiny{2}},\mbox{\tiny{4}} |0:\mbox{\tiny{5}})$ & $\Skew(0:\mbox{\tiny{1}},\mbox{\tiny{2}},\mbox{\tiny{4}}|0:\mbox{\tiny{3}},\mbox{\tiny{5}})$ \\  [0.3 in] 
$    6$ & $     \Skew(0:\mbox{\tiny{2}},\mbox{\tiny{4}}|0:\mbox{\tiny{3}},\mbox{\tiny{5}})$ & $    \Skew(0:\mbox{\tiny{2}},\mbox{\tiny{4}}|0:\mbox{\tiny{5}}|0:\mbox{\tiny{6}})$ & $    \Skew(0:\mbox{\tiny{1}},\mbox{\tiny{2}},\mbox{\tiny{4}}|0:\mbox{\tiny{5}})$ \\ [0.3 in] 
$    7$ & $    \Skew(0:\mbox{\tiny{2}},\mbox{\tiny{4}}, \mbox{\tiny{7}}|0:\mbox{\tiny{3}},\mbox{\tiny{5}})$ & $    \Skew(0:\mbox{\tiny{2}},\mbox{\tiny{4}}, \mbox{\tiny{7}}|0:\mbox{\tiny{5}}|0:\mbox{\tiny{6}})$ & $   \Skew(0:\mbox{\tiny{1}},\mbox{\tiny{2}},\mbox{\tiny{4}}, \mbox{\tiny{7}}|0:\mbox{\tiny{5}})$ \\ [0.3 in] 
$    8$ & $    \Skew(0:\mbox{\tiny{2}},\mbox{\tiny{4}}, \mbox{\tiny{7}}|0:\mbox{\tiny{3}},\mbox{\tiny{5}},\mbox{\tiny{8}})$ & $   \Skew(0:\mbox{\tiny{2}},\mbox{\tiny{4}}, \mbox{\tiny{7}}|0:\mbox{\tiny{5}},\mbox{\tiny{8}}|0:\mbox{\tiny{6}})$ & $   \Skew(0:\mbox{\tiny{1}},\mbox{\tiny{2}},\mbox{\tiny{4}}, \mbox{\tiny{7}}|0:\mbox{\tiny{5}},\mbox{\tiny{8}})$
\\ [0.3 in]  
$    9 $ & $\Skew(0:\mbox{\tiny{2}},\mbox{\tiny{4}}, \mbox{\tiny{7}}|0:\mbox{\tiny{3}},\mbox{\tiny{5}},\mbox{\tiny{8}}|0:\mbox{\tiny{9}})$ & $    \Skew(0:\mbox{\tiny{2}},\mbox{\tiny{4}}, \mbox{\tiny{7}}|0:\mbox{\tiny{5}},\mbox{\tiny{8}}|0:\mbox{\tiny{6}},\mbox{\tiny{9}})$ & $   \Skew(0:\mbox{\tiny{1}},\mbox{\tiny{2}},\mbox{\tiny{4}}, \mbox{\tiny{7}}|0:\mbox{\tiny{5}},\mbox{\tiny{8}}|0:\mbox{\tiny{9}})$
\\[0.3 in] 
$    10$ & $     \Skew(0:\mbox{\tiny{2}},\mbox{\tiny{4}}, \mbox{\tiny{7}}|0:\mbox{\tiny{5}},\mbox{\tiny{8}}|0:\mbox{\tiny{9}})$ & $    \Skew(0:\mbox{\tiny{2}},\mbox{\tiny{4}}, \mbox{\tiny{7}}|0:\mbox{\tiny{5}},\mbox{\tiny{8}}|0:\mbox{\tiny{9}})$ & $   \Skew(0:\mbox{\tiny{2}},\mbox{\tiny{4}}, \mbox{\tiny{7}}|0:\mbox{\tiny{5}},\mbox{\tiny{8}}|0:\mbox{\tiny{9}})$\\

%
%
%
\end{tabular}
 \\ [0.3 in]
\end{minipage}
\end{table}
Following Definition \ref{twolinarrdef}, every time a box is removed, we save the step $j$ together with the removed entry of the partial $Q$-symbol at the step before to obtain the special two-line arrays

$$\L_{\op{S}(\T_{1})} = \begin{bmatrix} 6 & 10 \\ 1 & 3\end{bmatrix} \qquad \L_{\op{S}(\T_{2})} = \begin{bmatrix} 3 & 10 \\ 1 & 6\end{bmatrix} \qquad \L_{\op{S}(\T_{3})} = \begin{bmatrix} 6 & 10 \\ 3 & 1\end{bmatrix}$$
and, by \eqref{arraybump}, the standard tableaux
$$\E_{\op{S}(\T_{1})} = \Skew(0:\mbox{\tiny{1}},\mbox{\tiny{3}}|0:\mbox{\tiny{6}},\mbox{\tiny{10}}) \qquad \E_{\op{S}(\T_{2})} = \Skew(0:\mbox{\tiny{1}},\mbox{\tiny{6}}|0:\mbox{\tiny{3}},\mbox{\tiny{10}}), \qquad \E_{\op{S}(\T_{3})} = \Skew(0:\mbox{\tiny{1}}|0:\mbox{\tiny{3}}|0: \mbox{\tiny{6}}|0: \mbox{\tiny{10}})\quad .$$
Following Definition \ref{finalqdef}, the associated final $Q$-symbols are
\normalsize
$$\Q_{\op{S}(\T_1)} = \Skew(0:\mbox{\tiny{1}},\mbox{\tiny{2}}, \mbox{\tiny{4}}, \mbox{\tiny{7}}|0:\mbox{\tiny{3}},\mbox{\tiny{5}}, \mbox{\tiny{8}}|0: \mbox{\tiny{6}}, \mbox{\tiny{9}}|0: \mbox{\tiny{10}}) \qquad \Q_{\op{S}(\T_2)} = \Skew(0:\mbox{\tiny{1}},\mbox{\tiny{2}}, \mbox{\tiny{4}}, \mbox{\tiny{7}}|0:\mbox{\tiny{3}},\mbox{\tiny{5}}, \mbox{\tiny{8}}|0: \mbox{\tiny{6}}, \mbox{\tiny{9}}|0: \mbox{\tiny{10}}) \qquad \Q_{\op{S}(\T_3)} = \Skew(0:\mbox{\tiny{1}},\mbox{\tiny{2}}, \mbox{\tiny{4}}, \mbox{\tiny{7}}|0:\mbox{\tiny{3}},\mbox{\tiny{5}}, \mbox{\tiny{8}}|0: \mbox{\tiny{6}}, \mbox{\tiny{9}}|0: \mbox{\tiny{10}}). $$
yielding 
\begin{align*}
\phi(\T_{1}) = \Skew(0: , , , \mbox{\tiny{1}}|0: , , \mbox{\tiny{1}}|0: , \mbox{\tiny{2}}|0: \mbox{\tiny{2}}) \qquad \phi(\T_{2}) = \Skew(0: , , , \mbox{\tiny{1}}|0: , , \mbox{\tiny{2}}|0: , \mbox{\tiny{1}}|0: \mbox{\tiny{2}}) \qquad \phi(\T_{3}) = \Skew(0: , , , \mbox{\tiny{1}}|0: , , \mbox{\tiny{2}}|0: , \mbox{\tiny{3}}|0: \mbox{\tiny{4}}).
\end{align*}
\end{ex}


\subsection{Proof of Proposition \ref{importantprop} and Theorem \ref{bijection}}

\begin{defn} Let us call a \textit{cancellation} a step $j$ such that the $j-th$ element of the word $W(\T)$ is a barred letter. Denote by $\lambda_{s}$ the largest stable partition contained in the Young diagram of shape $\lambda$: that is, the maximal sub-shape such that its columns have at most length $n$. 
\end{defn}

\begin{ex}
In Example \ref{guidingexample}, if we set $n = 3$, then 

$$\lambda_{s} = \Skew(0: , , , |0: , , |0: , ).$$
\end{ex}

\noindent
\begin{rem} \label{rinrowr}
All the boxes surrounding the area determined by $\lambda_{s}$ in a given element of $\op{domres}(\lambda, \mu)$ must be barred and therefore the steps at which they appear do not appear in the partial $\Q$-symbol. Also note that, by dominance, all entries occurring in row $r$ within $\lambda_{s}$ of an element of $\op{domres}(\lambda, \mu)$ must me equal to $r$.
\end{rem}

\begin{lem} Let $T,T'\in \op{domres}(\lambda,\mu)$. Then $\Q^p_{T}=\Q^p_{T'}$.

\end{lem}


\begin{proof}

It follows from Definition \ref{partialq} that it is enough to show that $\Q^{p}_{\T}$ and $\Q^{p}_{\T'}$ have the same entries. Since both partial $\Q$-symbols have the same shape 
$\mu$, a fixed number of boxes will be bumped out of each row $r$. Recall that in our case this amounts to removing the box at the beginning of that row and shifting the remaining entries of that row to the left. Therefore it suffices to show that if $j$ is a cancellation for $\T$, then $j$ does not appear in $\Q^{p}_{\T'}$. Otherwise, the $j$-th letter in $W(\T)$ is barred but the $j$-th letter in $W(\T')$ is not. Assume that this happens at row $r$. Then, by Remark \ref{rinrowr}, $r \leq n$ and the unbarred entry in $\T'$ must be an $r$. Moreover, to the right of the step $j$ entry of $\T$ in row $r$ can be no more $r$'s. This means that $\mu_{r} \leq s-1$, where $s$ is the column at which the $j$-th step appears. Moreover, by semistandardness, in row $r$ of $\T'$, all entries to the left of the $j$-th step entry must also be equal to $r$. This means that, in $\T'$, at least an $\bar r$ must appear to the southwest quadrant  pivoted by the $j$-th entry. This is true for every entry equal to $r$ in row $r$ and column $S \geq s$ of $\T$. These cancellations will eliminate the corresponding entries from $\Q^{p}_{\T'}$, in particular $j$.

Thus we conclude that $\Q^{p}_{\T_{1}} = \Q^{p}_{\T_{2}}$.  

\end{proof}

\begin{rem} \label{sideremark}
Let 
$$\L_{\T} = \begin{bmatrix}
j_{1} & \cdots & j_{r} \\
i_{1} & \cdots & i_{r}
\end{bmatrix}$$

\noindent
be the two-line array obtained by construction the partial $\Q$-symbol as explained above. In particular recall that this means that, at step $j_{s}$, the entry $i_{s}$ was bumped out of the partial $\Q$-symbol at step $j_{s-1}$. First note that 

$$i_{1} \rightarrow \cdots \rightarrow i_{r} \rightarrow \Q^{p}_{\T} $$

\noindent
bumps, by definition, at each step, the entries $i_{s}$ back into the rows out of which they were bumped, and leaves all the other entries in their same row. 
\end{rem}

\begin{lem}\label{bigger} Assume that $j_{s} < j_{t}$ are cancellations in the same column. Recall that in our notation $i_{s}$ is the entry bumped out of the partial $\Q$-symbol at step $j_{s}$. Then 

\begin{align}
\label{columncancellationsareok}
i_{s} > i_{t}.
\end{align}

\end{lem}

\begin{proof} 

This is true due to the ordering in our alphabet $1 < \ldots < n < \bar n < \ldots < \bar 1$ and the semi-standardness of elements in $\op{domres}(\lambda, \mu)$. Indeed, semi-standardness means that if we have an $\bar i$ at step $j_{s}$ and a $\bar j$ at step $j_{t}$

$$\Skew(0:\vdots|0:\mbox{\tiny{$\bar i$}}|0: \vdots |0: \mbox{\tiny{$\bar j$}})$$

\noindent
then the left-most entry in row $i$ is bumped out at step $j_{s}$, and then at step $j_{t}$, the left most entry in row $j$, which is above row $i$ by semi-standardness and the ordering of the alphabet. This implies (\ref{columncancellationsareok}). 
\end{proof}

To see what Lemma \ref{bigger} means for the proof of Proposition \ref{importantprop}, let us look at the following example first. 

\begin{ex}
Let $n \geq 3, \lambda = \omega_{2}+ \omega_{4},$ and $\mu = \omega_{2}$. Then 
$$\T = \Skew(0: \mbox{\tiny{1}}, \mbox{\tiny{1}}|0: \mbox{\tiny{2}}, \mbox{\tiny{2}}|0: \mbox{\tiny{$\bar 2$}}|0: \mbox{\tiny{$\bar 1$}}) \in \op{domres}(\lambda, \mu).$$

\noindent
Below are the sequences of partitions and partial $\Q$-symbols together with the special 2-line arrays at every cancellation.

\small{
\begin{align*}
1 &\hbox{ }& 2 & \hbox{ }& 3 &\hbox{ }& 4 &\hbox{ }& 5 &\hbox{ }& 6 \\
 \Skew(0:) &\hbox{ }& \Skew(0: |0: ) & \hbox{ }& \Skew(0:, |0:) &\hbox{ }& \Skew(0: , |0: , ) &\hbox{ }& \Skew(0:, |0: )&\hbox{ }& \Skew(0: |0: ) \\
\Skew(0:\mbox{\tiny{1}}) &\hbox{ }& \Skew(0:\mbox{\tiny{1}}|0: \mbox{\tiny{2}}) &\hbox{ }& \Skew(0:\mbox{\tiny{1}}, \mbox{\tiny{3}}|0:\mbox{\tiny{2}}) &\hbox{ }& \Skew(0:\mbox{\tiny{1}}, \mbox{\tiny{3}}|0:\mbox{\tiny{2}}, \mbox{\tiny{4}}) &\hbox{ }& \Skew(0:\mbox{\tiny{1}}, \mbox{\tiny{3}}|0: \mbox{\tiny{4}})  &\hbox{ }& \Skew(0: \mbox{\tiny{3}}|0: \mbox{\tiny{4}}) \\
 &\hbox{ }&  & \hbox{ }&  &\hbox{ }&  &\hbox{ }& \begin{bmatrix} 5 \\ 2 \end{bmatrix}  &\hbox{ }& \begin{bmatrix} 6 \\ 1 \end{bmatrix}.
\end{align*}
}

\normalsize
\noindent
In this example (using the above notation) $j_{s} = 5, j_{t} = 6, i_{s} = 2,$ and $i_{t} = 1$ (also $i = 2$ and $j = 1$ but this is not important for the example). The fact that $i_{s}> i_{t}$ implies 

\begin{align*}
\Q_{\T} &= 6 \rightarrow 5 \rightarrow 2 \rightarrow 1 \rightarrow \Skew(0: \mbox{\tiny{3}}|0: \mbox{\tiny{4}})  \\
            &= 6 \rightarrow 5 \rightarrow  \Skew(0:\mbox{\tiny{1}}, \mbox{\tiny{3}}|0:\mbox{\tiny{2}}, \mbox{\tiny{4}}) \\
            &= \Skew(0:\mbox{\tiny{1}}, \mbox{\tiny{3}}|0:\mbox{\tiny{2}}, \mbox{\tiny{4}}|0:\mbox{\tiny{5}}|0:\mbox{\tiny{6}}) \quad .
\end{align*}

\noindent

\end{ex}

Note that (\ref{columncancellationsareok}) implies that the steps are bumped back in where they belong, i.e. into the row (with respect to $\T$) in which they are a cancellation. We see in what follows that this is a general phenomenon. For the comfort of the reader, we show the two simplest cases in Lemma \ref{firsteasy} and Lemma \ref{secondeasy} first to illustrate how the proof of Proposition \ref{importantprop} works, and then do the general case.

\begin{lem}\label{firsteasy} If in $\T$ all barred entries occur in the first (left-most) column, then Proposition \ref{importantprop} holds for $\T$.
\end{lem}
\begin{proof}
Let $\T$ be such that all barred entries occur in the first (left-most) column. Let  $d$ be the first cancellation (i.e. the step at which the first deletion occurs.) Since the rest of the barred entries all occur at the end of the column, all entries in the partial $\Q$-symbol at step $d-1$ are strictly smaller that $d$, in particular those in the first column, so bumping these steps back in is, by (\ref{columncancellationsareok}), just putting them back in their column.
\end{proof}

\begin{lem} \label{secondeasy}
If in $\T$ all barred entries occur in the same column, but not the last one, then Proposition \ref{importantprop} holds for $\T$.
\end{lem}
\begin{proof}
Let $\T$ be such that all barred entries are in the same column, but not the last one. Our situation is depicted below, where the picture illustrates the partial $\Q$-symbol at step $d-1$ (the shaded part is not part of the partial $\Q$-symbol but shows where the steps $d$ to $d+k$ appear in $\T$). 

\begin{center}
\begin{tikzpicture}
\fill[red!20!, opacity = 0.5] (3,0.5) rectangle (3.5,3);
\node[right] at (0.2,2) {$\begin{smallmatrix}&\hbox{\small{All entries here}}\\ &\hbox{\small{are larger}}\\ &\hbox{\small{than }}d+k.\end{smallmatrix}$};
\node[right] at (0.2,3.5) {\small{smaller than }$d$};
\node[right] at (3.7,2.7) {$\longleftarrow d$-th step};
\node[right] at (3.7,0.7) {$\longleftarrow d+k$-th step};
\draw[-] (0,0) -- (0,4);
\draw[-] (0,0) -- (1,0);
\draw[-] (1,0) -- (1,0.5);
\draw[dashed] (1,0.5) -- (3,0.5);
\draw[-] (3,0.5) -- (3,4);
\draw[-] (3,0.5) -- (3.5,0.5);
\draw[-] (3.5,0.5) -- (3.5,3);
\draw[-] (3,3) -- (6,3);
\draw[-] (3,2.5) -- (3.5,2.5);
\draw[-] (3,1) -- (3.5,1);
\draw[-] (6,3) -- (6,4);
\draw[-] (0,3) -- (3,3);
\draw[-] (0,4) -- (6,4);
\end{tikzpicture}
\end{center}

Therefore if $\Q^{p, d-1}_{\T}$ is the partial $\Q$-symbol at step $d-1$, and if we let $i_{1}, \ldots, i_{k}$ be the entries bumped out at steps $d, \ldots, d + k$ respectively, then (\ref{columncancellationsareok}) implies:

\begin{align*}
\Q_{\T} = d+k \rightarrow \cdots \rightarrow d \rightarrow i_{1} \rightarrow \cdots \rightarrow i_{k} \rightarrow \Q^{p, d-1}_{\T}.
\end{align*}

Which means that all entries were in fact bumped back into their row.
\end{proof}

\begin{proof}[Proof of Proposition \ref{importantprop}]
We already know that all the steps $i_{1}, \ldots i_{r}$ were bumped back into their row. Let 

\begin{align*}
\Q^{p, half}_{\T} = i_{1} \rightarrow \cdots \rightarrow i_{r} \rightarrow \Q^{p}_{\T}.
\end{align*}

We need to check that when we compute

\begin{align*}
\Q_{\T} = j_{s_{1}} \rightarrow \cdots \rightarrow j_{s_{r}} \rightarrow \Q^{p, half}_{\T},
\end{align*}

\noindent
each step $j_{s_{k}}$ is bumped back into its row (the row in $\T$ in which step $j_{s_{k}}$ of the word-reading occurs). First of all, it is clear that the first of these entries to be bumped in (i.e. entry $j_{s_{r}}$) is indeed bumped into its original row. This is because the entry that it bumped out in the partial $\Q$-symbol process is, by definition, the largest entry to be bumped out. 

Now consider an entry $j_{s_{t}}$ and assume that it is in row $r$. We consider two cases for this entry. The first case is when it represents/ is a cancellation that is the first one to happen in a given column - this means that the first barred entry in that column of $\T$ is in row $r$. It follows that, in $\T$, the entry directly above (hence at an earlier/smaller step) it must be an $r-1$. 

This implies in particular that in the partial $\Q$-symbol up to this point, ($j_{s_{t+1}} \rightarrow \cdots \rightarrow j_{s_{r}} \rightarrow \Q^{p}_{\T}$) there is at least one entry in row $r-1$ that is smaller than $j_{s_{t}}$. Therefore to assure that $j_{s_{t}}$ is bumped into row $r$ we need to check that there are no entries smaller than $j_{s_{t}}$ at that moment. By induction we may assume that all the entries $j_{s_{t+1}}, \ldots , j_{s_{r}}$ were bumped into their rows in 

\begin{align}
\label{inductionpartial}
j_{s_{t+1}} \rightarrow \cdots \rightarrow j_{s_{r}} \rightarrow \Q^{p}_{\T}.
\end{align}

\noindent
By semi-standardness, all entries to the right of the entry at step $j_{s_{t}}$ in $\T$ have to be barred as well. These would be the only candidates to appear in row $r$ at this point. 

However, the entries they cancel out (in the partial $\Q$-symbol process) are, by construction, also smaller than $j_{s_{t}}$, so these entries must belong to the set $\{j_{s_{p}}: p < t\}$, which means that they haven't been bumped into (\ref{inductionpartial}) yet, in particular they cannot be in row $r$. 

The second case is when the entry in $\T$ occurring at the $j_{s_{t}}$-th step is not the first in a column. Then by induction all the entries $j_{s_{t+1}}  \cdots  j_{s_{r}}$ have been bumped into their rows, and by (\ref{columncancellationsareok}) these entries include the steps in the same column and above step $j_{s_{t}}$, so there are entries in the rows $1, \ldots, r -1$, and by the same argument as for the first case above, there can be no entry, in (\ref{inductionpartial}) in row $r$ smaller than $j_{s_{t}}$. 

With this we conclude that the final $\Q$-symbol $\Q_{\T}$ has shape $\lambda$ and depends only on $\lambda$, while the partial $\Q$-symbol $\Q^{p}_{\T}$ depends on $\lambda$ and $\mu$. 
\end{proof}

\begin{proof}[Proof of Theorem \ref{bijection}]
The map $\phi$ is injective: Let $\T_{1} \neq \T_{2}$ be two distinct elements in $\op{domres}(\lambda, \mu)$. By Proposition \ref{importantprop} we know that $\Q_{\T_{1}} = \Q_{\T_{2}}$, which in view of the bijection from Theorem \ref{symplecticrsk}, means that $\phi(\T_{1}) \neq \phi(\T_{2})$, necessarily.

The map $\phi$ is surjective: Let $L \in \op{LRS}(\lambda/\mu)$, and let $\Q_{\lambda}$ be the tableau introduced in Definition \ref{qlambda}. Let 
\begin{align*}
S^{d}_{\mu} = (\mu_{0} = \emptyset, \mu_{1}, \ldots , \mu_{d} = \mu)
\end{align*}

\noindent
be the up-down sequence that corresponds to the pair $(\Q_{\lambda}, L)$ in Theorem \ref{symplecticrsk}. To $S^{d}_{\mu}$ corresponds a unique word w  in the alphabet

\begin{align*}
\mathcal{C}_{n} =  1< \ldots < n < \bar n < \ldots < \bar 1. 
\end{align*}

Let $\Phi^{-1}(L)$ be the filling of the Young diagram of shape $\lambda$ such that its word coincides with w. We want to show that $\Phi^{-1}$ is an inverse, and to show this we need to show that $\Phi^{-1}(L)$ is a semi-standard Young tableau in the alphabet $\mathcal{C}_{n}$ that belongs to the set $\op{domres}(\lambda,\mu)$. The dominance condition is satisfied by definition of the word w.  A case by case analysis assures that this tableau is semi-standard and is therefore an element $\T$ in $\op{domres(\lambda, \mu)}$ such that $\phi(\T) = L$.
\end{proof}

\section{Inequalities for $\op{LRS}(\lambda/ \mu , \eta)$ in the stable case}\label{pol1}
In this section we describe the set $\bigcup_{\eta \text{ even}}\op{LRS}(\lambda/ \mu , \eta)$ for $\mu \subset \lambda$ and $\lambda$ stable (see Definition \ref{stable}) as the set of lattice points of a convex polytope. Note that in this case a Littlewood-Richardson tableau $\T\in \bigcup_{\eta \text{ even}}\op{LR}(\lambda/ \mu , \eta)$ is always $n$-symplectic Sundaram and hence
$$\bigcup_{\eta \text{ even}}\op{LRS}(\lambda/ \mu , \eta)=\bigcup_{\eta \text{ even}}\op{LR}(\lambda/ \mu , \eta).$$

We define a set of inequalities which are equivalent to the ones used to describe Littlewood-Richardson triangles in \cite{pakvallejo}. These are, in turn, in linear bijection to the Berenstein-Zelevinsky triangles in \cite{bztriangles} and the hives of Knutson-Tao \cite{knutsontao}. 

\subsection{The inequalities}
In $\T\in \bigcup_{\eta \text{ even}}\op{LRS}(\lambda/ \mu , \eta)$ entries are weakly increasing along the rows, so $\T$ is determined by the family of numbers

\begin{align}
\label{skewtab}
v(\T) = \{(i,j) \in \mathbb{Z}_{\geq 0}; i \in \{0, 1, \ldots, n\}, j \in  \{1, \ldots, n\}\}
\end{align} 
 
Here, for $$i , j \in \{1, \ldots, n\},$$ \noindent

\begin{align*}
(i,j) := \hbox{ number of $i's$ in row $j$}.
\end{align*}

\noindent
It is straightforward from the definition of $\bigcup_{\eta \text{ even}}\op{LRS}(\lambda/ \mu , \eta)$ that we have the following restrictions on the possible entries in $\T$. 

\begin{lem}
\label{skewvariablelemma}
Let $\T \in\bigcup_{\eta \text{ even}}\op{LRS}(\lambda/ \mu , \eta)$, for some $\lambda$ and for some $\mu$. Then, in $v(\T)$, $(i,j) = 0$ except for possibly $i = j$ and $i = k$ for $1 \leq k \leq j-1$. The vector $v(\T)= \{(i,j); i \in \{1,\ldots,n\}, j \in  \{1, \ldots, n\}\}$ is determined by the set of $n(n+1)$ variables/non-negative numbers which we can picture in an array as below:\\

\begin{center}
$
\begin{array}{llll}
(1,1)& & & \\
(2,2)&(1, 2)& & \\
(3,3)&(2, 3)&(1, 3)& \\
\hbox{    $\hbox{  }$   }\vdots & &\hbox{    $\hbox{  }$   }\vdots & \\
(n,n)& \cdots & & (1,n).
\end{array}
$
\end{center}

\end{lem}

\begin{prop}
\label{skewinequalities}
Let $\T$ be a skew tableau of skew shape $\lambda/ \mu$ and $v(\T) = \{(i,j) \in \mathbb{Z}_{\geq 0}; i \in \{1,\ldots,n\}, j \in  \{1, \ldots, n\}\}$ as in (\ref{skewtab}). Then $\T \in \op{LR}(\lambda/ \mu, \eta)$ for some even $\eta$ if and only if the following inequalities are satisfied: \\

\begin{align} \label{inequ1}
\sum_{p=0}^{i-1}(p,j)& \ge \sum_{p=0}^{i}(p,j+1) \qquad 1 \le i \le j \le n \\ \label{inequ2}
\sum_{q=i}^{j} (i,q) & \ge \sum_{q=i+1}^{j+1}(i+1,q) \qquad 1 \le i \le j \le n \\ \label{inequ3}
\sum_{i=1}^{n}(i,j)&=\sum_{i=1}^{n}(i,j+1) \qquad \text{ for }1\le j \le n, \text{ j odd}, 
\end{align}
where we define $(0,j):=\mu_j$ for all $1\le j \le n$.
\end{prop}

\begin{proof}
The Inequalities \eqref{inequ1} and \eqref{inequ2} ensure by Lemma 3.1 in \cite{pakvallejo} that $T$ is in $\bigcup_{\eta}\op{LRS}(\lambda/ \mu , \eta)$ while Inequality \eqref{inequ3} ensures that $\eta$ is even.
\end{proof}

Let $\mathcal{LR}(\lambda, \mu) \subset \mathbb{R}^{\frac{n(n+1)}{2}}$ be the convex polytope of vectors as in \eqref{skewtab} satisfying the inequalities of Proposition \ref{skewinequalities}. We have the following theorem by Lemma \ref{skewvariablelemma} and Proposition \ref{skewinequalities}.

\begin{thm} Let $\mu \subset \lambda$ be dominant stable weights. The map
\begin{align*}
\displaystyle\bigcup_{\eta \text{ even}}\op{LR}(\lambda/ \mu, \eta) & \rightarrow \mathcal{LR}(\lambda, \mu) \\
 \T & \mapsto v(\T)
\end{align*}
is a bijection between the Littlewood-Richardson-Sundaram tableaux in $$\bigcup_{\eta \text{ even}}\op{LR}(\lambda/ \mu, \eta)$$ and the lattice points of the polytope $ \mathcal{LR}(\lambda, \mu)$.
\end{thm}

\section{Inequalities for Domres in the stable case}
\label{domresviainequalities}

In this section we present a description of the sets $\op{domres}(\lambda, \mu)$, in the case that $\lambda$ is a stable weight, as the set of lattice points of a polytope.

\subsection{The convex polytope associated to $\op{domres}(\lambda, \mu)$}

Since entries are weakly increasing along the rows, each semi-standard Young tableau $\T$ is determined by the family of numbers, parametrized by pairs in $\mathcal{C}_{n}\times \{1, \ldots, n\}$, 
$$w(\T) = \{(i,j); i \in \mathcal{C}_{n}, j \in  \{1, \ldots, n\}\},$$ 
where, for $i \in \mathcal{C}_{n}$ and $j \in \{1, \ldots, n\}$, 

\begin{align*}
(i,j) := \hbox{ number of $i's$ in row $j$}.
\end{align*}

The following lemma is immediate from the definitions.

\begin{lem}
\label{variablelemma}
Let $\T$ be an element of $\op{domres}(\lambda, \mu)$, for some $\lambda$ and for some $\mu$. Then, in $w(\T)$, $(i,j) = 0$ except for possibly $i = j$ and $i =\bar k$ for $1 \leq k \leq j-1$. That is, the vector $w(\T) = \{(i,j); i \in \mathcal{C}_{n}, j \in  \{1, \ldots, n\}\}$ is determined by the family of non-negative numbers, parametrized by the pairs pictured in the array as below:\\

\begin{center}
$
\begin{array}{llll}
(1,1)& & & \\
(2,2)&(\bar 1, 2)& & \\
(3,3)&(\bar 2, 3)&(\bar 1, 3)& \\
\hbox{    $\hbox{  }$   }\vdots & &\hbox{    $\hbox{  }$   }\vdots & \\
(n,n)& \cdots & & (\bar 1,n)
\end{array}
$
\end{center}

\end{lem}

\begin{defn}\label{cancellationproperty}
A semi-standard Young tableau $\T$ as above has the \textit{cancellation property} if, at each step of the word reading, every $\bar{i}$ in the word $W(\T)$ (for $i \in \{1,\ldots, n\}$), can be paired with one $i$ to its left which is not paired with another $\bar{i}$ at a previous step.
\end{defn}
We say that ``all barred letters cancel out,'' or, for each $i$, the $\bar{i}$ always cancels out with an $i$. For example, if $n=3$, the semi-standard Young tableau 
\begin{align*}
\T = \Skew(0:\mbox{\tiny{1}},\mbox{\tiny{2}},\mbox{\tiny{3}}|0:\mbox{\tiny{$\overline{3}$}})
\end{align*}
\noindent has the cancellation property: its word is $W(\T) = 321\bar 3$. The one barred letter in it is $\bar 3$, and there is a $3$ at the beginning with which it cancels out. The semi-standard Young tableau 

\begin{align*}
\T = \Skew(0:\mbox{\tiny{1}},\mbox{\tiny{2}},\mbox{\tiny{$\overline{3}$}}|0:\mbox{\tiny{3}})
\end{align*}
\noindent does not have the cancellation property.

\begin{prop}
\label{cancellinglemma}
A semi-standard Young tableau $\T$ that satisfies the conclusions from Lemma \ref{variablelemma} has the cancellation property if and only if, in $w(\T)$, for every $i \in \{1, \ldots, n\}$ and every $i<k\leq n$ the following inequality holds for $w(\T)$: 

\begin{align}
\label{cancellationinequality}
(i,i) \geq (\bar i, k) + \underset{1 \leq l<  \bar i}{\sum}(l,k).
\end{align}
\end{prop}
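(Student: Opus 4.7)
The plan is to interpret both sides combinatorially and translate cancellation into a per-column condition using the column-by-column, right-to-left, top-to-bottom reading of $w(\T)$. By Lemma \ref{variablelemma}, all $i$'s of $\T$ occupy columns $1, \ldots, (i,i)$ of row $i$, and each column of $\T$ contains at most one $\bar i$, since barred indices strictly decrease down a column by semi-standardness. I would first reformulate cancellation as: every $\bar i$ of $\T$ lies at some position $(k, c)$ with $c \leq (i,i)$. Indeed, if some $\bar i$ sits at $(k, c)$ with $c > (i,i)$, then at the moment it is read all $i$'s, which live in columns $\leq (i,i) < c$, are still unread, so the prefix already fails. Conversely, if every $\bar i$ lies in a column $\leq (i,i)$, then after processing column $c$ the count $\#i$ equals $\max(0, (i,i)-c+1)$, while $\#\bar i$ is bounded by the number of columns in $[c, (i,i)]$ containing $\bar i$, which is at most $\max(0, (i,i)-c+1)$; intermediate moments inside a column are also safe because the $i$ in row $i$ (present whenever $c \leq (i,i)$) is read strictly before any $\bar i$ (which lies in a row $k > i$).

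Next I would unpack the right-hand side of the proposed inequality using Lemma \ref{variablelemma}: among letters $l < \bar i$, only $l = k$ (contributing $(k,k)$) and $l = \bar m$ with $i < m < k$ (contributing $(\bar m, k)$) give nonzero $(l, k)$. Hence
\begin{align*}
(\bar i, k) + \sum_{1 \leq l < \bar i}(l, k) = (k, k) + \sum_{m = i+1}^{k-1}(\bar m, k) + (\bar i, k),
\end{align*}
which, when $(\bar i, k) > 0$, equals the column index of the rightmost $\bar i$ in row $k$. In that case the inequality is precisely the statement ``the rightmost $\bar i$ in row $k$ lies in a column $\leq (i, i)$.''

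The forward direction is then immediate in the case $(\bar i, k) > 0$. For the case $(\bar i, k) = 0$, I would use the monotonicity $(i, i) \geq (i+1, i+1) \geq \cdots \geq (k, k)$ supplied by semi-standardness: in column $c = (j+1, j+1)$ the entry in row $j+1$ is $j+1$, and the only admissible entry in row $j$ less than $j+1$ is $j$, forcing $(j+1, j+1) \leq (j, j)$. Letting $m^\ast \in \{i+1, \ldots, k-1\}$ be the smallest index with $(\bar{m^\ast}, k) > 0$ (or $m^\ast = k$ if none exists), the right-hand side of the inequality for $(i, k)$ equals either the rightmost column of $\bar{m^\ast}$ in row $k$, which is $\leq (m^\ast, m^\ast) \leq (i, i)$ by the $(m^\ast, k)$ case already handled, or simply $(k,k) \leq (i, i)$ by the monotonicity. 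The backward direction is the direct converse of the reformulation: assuming all the proposed inequalities, every $\bar i$ in $\T$ lies in a column $\leq (i, i)$, and cancellation follows.

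The main obstacle will be the $(\bar i, k) = 0$ case in the forward direction, where the inequality does not correspond to any direct cancellation condition on $\bar i$'s but must be recovered from cancellation of nearby barred letters together with the monotonicity of the $(j, j)$'s; the bookkeeping of intermediate prefix counts within a single column in the backward direction also requires the observation that each column of $\T$ contains at most one $\bar i$.
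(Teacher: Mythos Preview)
Your argument is correct. Both you and the paper hinge on the same observation: the right-hand side of the inequality, when $(\bar i,k)>0$, is precisely the column index of the rightmost $\bar i$ in row $k$, and cancellation for $\bar i$ amounts to this column being at most $(i,i)$.

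The paper argues by contradiction: it assumes cancellation fails, locates the first offending $\bar i$ in some row $k$, identifies its column as the right-hand side, observes by semi-standardness that there can be no $i$ above it, and deduces $(i,i)<\text{RHS}$; the converse is then asserted to follow from ``the previous arguments''. Your route is more direct: you reformulate cancellation globally as the per-column condition ``every $\bar i$ sits in a column $\leq (i,i)$'', verify this carefully (using that each column contains at most one $\bar i$ and that within a column the $i$ in row $i$ is read before any $\bar i$ below), and then read off the inequality. Your treatment of the case $(\bar i,k)=0$ in the forward direction --- via the monotonicity $(i,i)\geq (i+1,i+1)\geq\cdots\geq(k,k)$ obtained from semi-standardness together with Lemma~\ref{variablelemma}, and then bootstrapping from the nearest $m^{\ast}$ with $(\overline{m^{\ast}},k)>0$ --- is a point the paper leaves implicit, so your version is in fact more complete on this detail.
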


\begin{proof}
Let us assume that a certain semi-standard Young tableau $\T$ does not have the cancellation property. Therefore there exists an $i \in \{1, \ldots, n \}$ such that, in $w(\T)$, $(\bar i, k) \neq 0$ for some $i < k \leq n$ (cf. Lemma \ref{variablelemma}) and such that this $\bar i$ is not canceled out (cf. Definition \ref{cancellationproperty}). We assume that $k$ is minimal with this property. Let us consider for a moment the first box filled in with an $\bar i$ that does not cancel out with an $i$ in the word $W(\T)$ of $\T$. Call this box $b$. Let $l \in \mathbb{Z}_{\geq 1}$ be the number of the column that $b$ is at, counting them from left to right. Then 

\begin{align}
\label{thisisthecolumn}
l = (\bar i, k) + \underset{s<\bar i}{\sum}(s,k).
\end{align}

In order to contradict the inequality (\ref{cancellationinequality}) above, let us think of how semi-standard tableaux look like. Assume first that $(a,s) \neq 0 \neq (b,t)$ with $t<s$ and such that the right-most occurrence of $a$ in row $s$ is to the left of or on the column at which the right-most occurrence of $b$ takes place. Then $a<b$ necessarily holds. See Figure 2. In particular, if $(\bar i, s) \neq 0 \neq$ Secondly, observe that whenever there is box in $\T$ filled in with an $i$, all entries to its left must be filled in with an $i$ as well. This follows directly from semi-standardness and from Lemma \ref{variablelemma}, which in particular implies that the only unbarred letter that may occur as an entry in row $i$ is the letter $``i"$ itself.\\
 
The observations in the above paragraph lead to the conclusion that there can be no box filled in with an $i$ above box $b$ in column $l$. This implies 

\begin{align}
\label{noncancellationinequality}
(i,i) < (\bar i, k) + \underset{l<  \bar i}{\sum}(l,k),
\end{align}

\noindent which contradicts (\ref{cancellationinequality}). Now, if an inequality such as (\ref{noncancellationinequality}) holds for some $1\leq i \leq n$ and some $i<k \leq n$, then the previous arguments imply that the semi-standard tableaux $\T$ determined by such numbers $(i,j)$ cannot have the cancellation property. 

\begin{figure}\label{inequ}
$\Skew(0:\hbox{\tiny{a}}|0: \vdots |0: \vdots |0: , \cdots,\cdots, \hbox{\tiny{b}})$
\caption{}
\end{figure}
\end{proof}

\subsection*{Different word readings}
We consider another ``far eastern" word reading: we read rows, from right to left and top to bottom. This defines another word $W'(\T)$ associated to a given semi-standard tableau $\T$. 

\begin{ex}
Consider
\begin{align*}
\T = \Skew(0:\mbox{\tiny{1}},\mbox{\tiny{2}},\mbox{\tiny{3}}|0:\mbox{\tiny{$\overline{3}$}},\mbox{\tiny{$\overline{2}$}}).
\end{align*}
Then we have $W(\T) = 3 2 \bar 2 1 \bar 3$ and $W'(\T) = 321\bar 2 \bar 3$
\end{ex}

\begin{lem}
\label{nondependenceofcancelling}
A semi-standard tableau $\T$ satisfying the conclusions from Lemma \ref{variablelemma} has the cancellation property with respect to $W(\T)$ if and only if it has the cancellation property with respect to $W'(\T)$. 
\end{lem}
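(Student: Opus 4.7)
The plan is to reduce the equivalence to a single structural observation: that directly above every barred letter of $\T$, in the same column, sits an unbarred letter which serves as a canonical cancellation partner in both readings. If that observation holds, then cancellation is forced to hold simultaneously for $w(\T)$ and $w'(\T)$, and the equivalence is immediate.

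First, I would fix an occurrence of a barred letter $\bar i$ at a cell $(r,c)$ of $\T$. By Lemma \ref{variablelemma} we have $i<r$, so the cell $(i,c)$ lies inside the Young diagram of $\T$ (shapes are left-justified and weakly decreasing). Column-strict semi-standardness forces the entry at $(i,c)$ to be strictly less than $\bar i$ in the ordering $1<\cdots<n<\bar n<\cdots<\bar 1$. But Lemma \ref{variablelemma}, applied now to row $i$, restricts that entry to $\{i,\bar 1,\ldots,\bar{i-1}\}$, and every $\bar l$ with $l<i$ satisfies $\bar l>\bar i$. Therefore the entry at $(i,c)$ must be the letter $i$ itself.

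Next I would verify that this $i$ sits to the left of $\bar i$ in both words. In the column reading $w(\T)$, within column $c$ we proceed top-to-bottom, so $(i,c)$ precedes $(r,c)$. In the row reading $w'(\T)$, rows are processed top-to-bottom, so every entry in row $i$ (in particular $(i,c)=i$) precedes every entry in row $r$ (in particular the $\bar i$ at $(r,c)$). Consequently the same witness $i$ cancels $\bar i$ in either reading, and since this applies to every barred letter of $\T$, the cancellation property in fact holds simultaneously for $w(\T)$ and $w'(\T)$, yielding the equivalence.

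I do not expect a substantive obstacle: the whole argument hinges on the one structural fact that semi-standardness combined with the variable restriction of Lemma \ref{variablelemma} forces the cell directly above any $\bar i$ to carry the letter $i$ itself. The only mild subtlety worth flagging is that the witness cell $(i,c)$, lying strictly above $(r,c)$ in the same column, automatically precedes $(r,c)$ under both the column reading and the row reading conventions, which is precisely what decouples cancellation from the choice of word.
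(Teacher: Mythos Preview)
Your argument is correct, and in fact proves something stronger than the lemma asks: under the conclusions of Lemma~\ref{variablelemma}, the cancellation property holds \emph{automatically} for both readings, so the biconditional is vacuously true. The key structural observation---that the cell $(i,c)$ sitting in row $i$ above a $\bar i$ at $(r,c)$ is forced to carry the letter $i$---is exactly right, and the verification that this witness precedes $\bar i$ under both the column reading $w(\T)$ and the row reading $w'(\T)$ is clean.

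This is a genuinely different route from the paper's. The paper argues by contrapositive in two directions: if cancellation fails for $w'(\T)$ at some $\bar i$, it fails for $w(\T)$ because the row reading sees at least as many $i$'s before that $\bar i$ as the column reading does; and conversely. Your approach bypasses this asymmetric comparison entirely by exhibiting a single column-based matching that works uniformly. A pleasant by-product is that your observation also shows the inequalities of Proposition~\ref{cancellinglemma} are automatically satisfied for any semi-standard tableau of the special form, so they contribute nothing new to the polytope description of $\op{domres}(\lambda,\mu)$.

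One small point worth making explicit: if one reads the cancellation property as a ballot condition (each $\bar i$ paired with a \emph{distinct} preceding $i$), you should note that your witnesses are pairwise distinct. This is immediate---two occurrences of the same barred letter $\bar i$ cannot share a column by column-strictness, hence lie in distinct columns $c_1\neq c_2$ and receive distinct witnesses $(i,c_1)\neq(i,c_2)$---but it completes the matching argument and should be stated.
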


\begin{proof}
The proof is by contradiction and quite easy. If $\T$ does not have the cancellation property with respect to $w'(\T)$ minimally at a box $b = \Skew(0:\mbox{\tiny{$\overline{i}$}})$ for some $1 \leq i \leq n$, then it cannot have the cancellation property with respect to $W(\T)$ because there are more $i$'s to the left of the $\overline{i}$ corresponding to $b$ in $W'(\T)$ than in $W(\T)$. Now, if $\T$ does not have the cancellation property with respect to $W(\T)$ minimally at a box $b = \Skew(0:\mbox{\tiny{$\overline{i}$}})$ for some $1 \leq i \leq n$, the conclusions on the variables from Lemma \ref{variablelemma} together with semi-standardness imply that $b$ must belong to row $i$ and hence there can also be no cancellation property with respect to $W(\T)$. 
\end{proof}

\begin{defn} Let $\T$ be a semistandard Young tableaux with entries in the alphabet $\mathcal{C}_{n}$. and $i \in \{1,\ldots, n-1\}$. The \textit{$i$-content} of $W(\T)$ is
\begin{align*}
\# \{ i's \text{ in }W(\T) - \# \{i+1's \text{ in } W(\T)\} + \# \{\overline{i+1} \text{ in } W(\T)\}- \# \{ \overline{i} \text{ in } W(\T)\}.
\end{align*}
The \textit{$i$-content at a given box $b$ of $\T$} is the $i$-content of the subword of $W(\T)$ read (from the left) until the letter filling $b$.
\end{defn}

Recall the notion of the dominance property of a semi-standard tableau $\T$ from Definition \ref{dompropdef}.

\begin{lem}
\label{nondependenceofdominance}
A semi-standard tableau $\T$ satisfying the conclusions from Lemma \ref{variablelemma} with the cancellation property (with respect to either $W(\T)$ or $W'(\T)$ - and therefore both by Lemma \ref{nondependenceofcancelling}) has the dominance property with respect to $W(\T)$ if and only if it has the dominance property with respect to $W'(\T)$. 
\end{lem}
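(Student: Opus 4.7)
The plan is to argue by contrapositive in both directions, adapting the strategy used in the proof of Lemma \ref{nondependenceofcancelling}. I treat the implication ``dominance for $w(\T)$ $\Rightarrow$ dominance for $w'(\T)$'' in what follows, the reverse being symmetric.

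First I would use Lemma \ref{variablelemma} together with semi-standardness to record the layout of each row: reading row $j$ from left to right gives a run of $(j,j)$ copies of $j$, then a run of $(\overline{j-1},j)$ copies of $\overline{j-1}$, then $(\overline{j-2},j)$ copies of $\overline{j-2}$, and so on down to a run of $(\overline{1},j)$ copies of $\overline{1}$. In particular every unbarred $j$ lies in row $j$, every $\overline{k}$ lies in some row of index at least $k+1$, and $\overline{n}$ does not occur in $\T$ at all (since by Lemma \ref{variablelemma} all rows have index at most $n$); consequently the inequality $c_n\geq 0$ is automatic at every prefix, so any failure of the dominance property can only take the form $c_i<c_{i+1}$ for some $i<n$. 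A second key observation is that within a single row the two readings $w(\T)$ and $w'(\T)$ produce the entries in the same order, namely right to left, so the two words differ only in the way that entries from distinct rows get interleaved.

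Now assume dominance fails for $w'(\T)$, and choose a minimal prefix $P$ of $w'(\T)$ whose weight $\nu$ satisfies $c_i(\nu)<c_{i+1}(\nu)$ for some $i<n$. By the row-by-row description in the previous paragraph, $P$ must terminate immediately after a block of $\overline{i}$'s in some row $r>i$ has been exhausted, for otherwise the preceding prefix would already have been non-dominant. Write $R'(P)$ for the set of boxes of $\T$ read by $P$: it consists of rows $1,\ldots,r-1$ in full together with a right-hand strip of row $r$. I would then exhibit a prefix $Q$ of $w(\T)$ whose region $R(Q)$ (some complete right-hand columns of $\T$ together with a top strip of the next column) contains the same number of $i$'s and $\overline{i}$'s as $R'(P)$ and at most as many $i{+}1$'s but at least as many $\overline{i+1}$'s, so that $Q$ inherits the inequality $c_i<c_{i+1}$. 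Because every $i$ lies only in row $i$ and every $\overline{k}$ lies in rows of index strictly larger than $k$, the freedom in choosing where to cut $w(\T)$ along a column is precisely what is needed to engineer such a $Q$, and the cancellation inequality of Proposition \ref{cancellinglemma} furnishes the bounds that confirm this is possible in each instance.

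The hardest step is the last: translating a row-adapted prefix $R'(P)$ into a column-adapted prefix $R(Q)$ that matches it in the two coordinates $i$ and $i+1$ where dominance fails. I expect this to reduce, after a short case analysis on $i$ and on which row $r$ contains the offending block of $\overline{i}$'s, to a direct application of the cancellation inequality together with the rigid layout of rows supplied by Lemma \ref{variablelemma}; this mirrors how the corresponding step in the proof of Lemma \ref{nondependenceofcancelling} uses semi-standardness and the variable constraints to pin down exactly which letters can be present in the offending region.
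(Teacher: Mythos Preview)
Your proposal is not yet a proof: the step you yourself flag as hardest --- producing a column-adapted prefix $Q$ of $w(\T)$ from the row-adapted prefix $P$ of $w'(\T)$ with the right $i$- and $(i{+}1)$-counts --- is never carried out, and the appeal to ``the freedom in choosing where to cut $w(\T)$ along a column'' together with the cancellation inequality is not an argument but a hope. There is no evident reason why some column-cut must realise exactly the $i$- and $\overline{i}$-counts of $R'(P)$ while simultaneously having no more $(i{+}1)$'s and no fewer $\overline{i{+}1}$'s than $R'(P)$; you would have to exhibit the cut and verify these four inequalities, and nothing in Proposition~\ref{cancellinglemma} speaks to the $(i{+}1)$- or $\overline{i{+}1}$-counts of such a region. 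Your claim that the other implication is ``symmetric'' is also unjustified: the two readings are genuinely different (one column-adapted, one row-adapted), and the paper in fact treats the two directions by separate arguments.

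The paper avoids the construction of a new prefix altogether. It fixes the \emph{same} box $b=\Skew(0:\mbox{\tiny{$\overline{i}$}})$ at which dominance first fails and compares the $i$-content at $b$ in the two readings directly. The crucial observation --- which your sketch does not contain --- is that minimality of $b$ forces the entry in the box directly above $b$ to be strictly smaller than $\overline{i+1}$: if it were $\overline{i+1}$ then the $i$-content would already be negative one step earlier. This, together with semi-standardness, pins down which letters can occur in the region that distinguishes the two prefixes ending at $b$, and shows that none of them is $\overline{i}$ or $\overline{i+1}$; hence the $i$-content at $b$ is unchanged when one passes from $w(\T)$ to $w'(\T)$. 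The reverse direction is handled by a short semi-standardness argument about the region south-west of $b$. If you want to salvage your approach you would need to replace the unspecified prefix $Q$ by this same-box comparison, and drop the symmetry claim.
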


\begin{proof}
Assume first that $W(\T)$ does not have the dominance property, minimally, at some box $b$. By the assumptions on the shape of the tableau $\T$, this must happen at a box $b = \Skew(0:\mbox{\tiny{$\overline{i}$}})$ for some $1 \leq i \leq n$, and by the cancellation property and the conclusions of Lemma \ref{variablelemma}, the box $b$ must be strictly below row $i$. In fact, our tableau $\T$ has the following form (cut at row $i$):
\begin{center}
$\Skew(0:\mbox{\tiny{$i$}}, \mbox{\tiny{$i$}},\mbox{\tiny{$i$}}|0: \mbox{\tiny{i+1}}, \mbox{\tiny{i+1}},\mbox{\tiny{i+1}}|0:\cdots,\cdots,\vdots|0:\cdots,\cdots,\mbox{\tiny{$\overline{i}$}})$.
\end{center}
\noindent
(In the picture, the squares $\Skew(0:\mbox{\tiny{$i$}})$ represent possibly many squares filled in with $i$'s.) Note that since $b$ is minimal with the property that dominance fails at that spot, the squares above box $b = \Skew(0:\mbox{\tiny{$\overline{i}$}})$ cannot be filled in with $\overline{i+1}$. Therefore the boxes that appear to the left of box $b = \Skew(0:\mbox{\tiny{$\overline{i}$}})$ in the word $w'(\T)$ must have entries $k \in \mathcal{C}_{n}$ that satisfy $k < \overline{i+1} < \overline{i}$. In particular $k \neq \overline{i+1}$, and therefore the $i$-content at box $b$ with respect to $W'(\T)$ is the same as the $i$-content with respect to $W(\T)$, and this implies that $W'(\T)$ does not have the dominance property.\\

Now assume the converse is false: $W'(\T)$ does not have the dominance property, and, as before, assume this happens minimally at a box $b = \Skew(0:\mbox{\tiny{$\overline{i}$}})$ (it follows from Lemma \ref{variablelemma} and our assumptions that it must happen at such a box). The rest follows directly by semi-standardness (strictly increasing along the rows): the region of the tableau south-west to box $b$ does not contribute to the $i$-content in $W(\T)$ at box $b$. This implies that $W(\T)$ does not have the dominance property (minimally at $b$ as well). 
\end{proof}

\begin{prop}
\label{dominanceinequalitiesdomres}
Let $\T$ be a semi-standard Young tableau of shape $\lambda$ and content $\mu$ with entries in $\mathcal{C}_{n}$, satisfying the conclusions from Lemma \ref{variablelemma} and such that $W(\T)$ has the cancellation property. Then $\T$ belongs to the set $\op{domres}(\lambda, \mu)$ (i.e. $W(\T)$ has the dominance property) if and only if for every $1 \leq i \leq n-1$ and every $i \leq l \leq n$ the following inequality holds for $w(\T)$

\begin{align}
\label{domresdominance}
(i,i) - (i+1,i+1) - \overset{l}{\underset{k = i}{\sum}}[(\overline{i},k)-(\overline{i+1},k-1)] \geq 0.
\end{align}
\end{prop}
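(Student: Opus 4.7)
The plan is to apply Lemma~\ref{nondependenceofdominance} so that I may work with the word $w'(\T)$ instead of $w(\T)$. Lemma~\ref{variablelemma} and semistandardness pin down the exact sequence of letters in $w'(\T)$: reading each row right-to-left and rows top-to-bottom, row $j$ contributes the blocks of $\bar 1, \bar 2, \ldots, \overline{j-1}$ (with multiplicities $(\bar 1, j), \ldots, (\overline{j-1}, j)$) followed by a block of $j$'s of length $(j,j)$. Since $\mathfrak{g}^{\sigma} \cong \mathfrak{sp}(2n,\mathbb{C})$ is of type $C_n$, dominance of a running weight $c_1\varepsilon_1 + \cdots + c_n\varepsilon_n$ amounts to $c_1 \geq c_2 \geq \cdots \geq c_n \geq 0$. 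The condition $c_n \geq 0$ is automatic, as $\bar n$ never appears (by Lemma~\ref{variablelemma}), so the problem reduces to verifying $c_i - c_{i+1} \geq 0$ at every prefix, for each $1 \leq i \leq n-1$.

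For a fixed $i$, I would track the evolution of $c_i - c_{i+1}$ along $w'(\T)$, noting that it is affected only by the letters $i$, $i+1$, $\bar i$, $\overline{i+1}$. Rows $j < i$ are neutral; row $i$ raises $c_i - c_{i+1}$ monotonically by $(i,i)$; row $i+1$ lowers it monotonically first through the $\bar i$'s and then through the $i+1$'s, so its minimum inside this row is at its end. For $j \geq i+2$, row $j$ first exhibits $(\bar i, j)$ copies of $\bar i$ (each decreasing $c_i - c_{i+1}$ by $1$) and then $(\overline{i+1}, j)$ copies of $\overline{i+1}$ (each increasing it by $1$), the order being forced by $\overline{i+1} < \bar i$ in $\mathcal{C}_n$ together with semistandardness; hence the minimum inside row $j$ is reached between these two blocks. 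Assembling contributions, one finds that for every $l \geq i+1$ the minimum of $c_i - c_{i+1}$ attained by the reading of row $l$ equals
\[
(i,i) - (i+1,i+1) - \sum_{k=i+1}^{l}(\bar i, k) + \sum_{k=i+2}^{l-1}(\overline{i+1}, k),
\]
which matches the stated left-hand side for $(i,l)$ once the vanishings $(\bar i, k)=0$ for $k \leq i$ and $(\overline{i+1}, k-1)=0$ for $k \leq i+1$ are absorbed. The instance $l = i$ simplifies to $(i,i) \geq (i+1,i+1)$, which follows from semistandardness (and is implied by the $l = i+1$ instance).

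Dominance of $\T$ is thus equivalent to all these row-wise minima being non-negative, which is exactly the stated family of inequalities. The main obstacle would be the careful row-by-row bookkeeping: in particular, one must verify for each $l \geq i+2$ that within row $l$ the block of $\bar i$'s is fully processed before any $\overline{i+1}$ appears, so that the minimum of $c_i - c_{i+1}$ lies precisely at the gap between the two blocks—this is what produces the telescoping structure visible in the displayed inequality.
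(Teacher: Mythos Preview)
Your proposal is correct and follows essentially the same route as the paper: both reduce to the row-reading word $w'(\T)$ via Lemma~\ref{nondependenceofdominance} and identify the left-hand side of (\ref{domresdominance}) with the $i$-content of $w'(\T)$ read through the end of the $\bar i$-block in row $l$, which is exactly where the running value of $c_i - c_{i+1}$ attains its row-wise minimum. Your treatment is a direct minimum-tracking argument whereas the paper phrases the two implications by contradiction, but the content is the same; the only slips are cosmetic (the vanishing $(\overline{i+1},k-1)=0$ actually holds for $k\le i+2$, not just $k\le i+1$, and the $l=i$ inequality is justified by your parenthetical rather than by semistandardness alone), and neither affects the argument.
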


\begin{proof}
We use Lemma \ref{nondependenceofdominance}. Remember that the $i$-content at a given box is
\begin{align*}
\# i's - \# i+1's + \# \overline{i+1} - \# \overline{i}.
\end{align*}

\noindent Once more we proceed by contradiction. First note that for every $1 \leq i \leq n-1$ and $i < l \leq n$ the left hand side of inequality (\ref{dominanceinequalitiesdomres}) is precisely the $i$-content of $\T$ reading the word by rows (i.e. with respect to $W'(\T)$) starting with row $1$ and up until the last $\overline{i}$- entry (reading from right to left) in row $l$. Therefore, by Lemma \ref{nondependenceofdominance}, if in $w(\T)$

\begin{align*}
(i,i) - (i+1,i+1) - \overset{l}{\underset{k = i}{\sum}}[(\overline{i},k)-(\overline{i+1},k-1)] < 0
\end{align*}

\noindent for some such $1 \leq i < l \leq n$, then $W(\T)$ cannot have the dominance property and hence $\T$ does not belong to the set $\op{domres}(\lambda, \mu)$. Now assume that the tableau $\T$ does not belong to the set $\op{domres}(\lambda, \mu)$. As in the proof of Lemma \ref{nondependenceofdominance}, the box $b$ at which non-dominance happens minimally must be of the form $b = \Skew(0:\mbox{\tiny{$\overline{i}$}})$. This means that

\begin{align*}
c_{i}(b) := i\mbox{-content up until and including box } b = -1.
\end{align*}

\noindent Assume box $b$ is in row $l$, for some $i < l$ (it can only happen in such a row, as discussed in the proof of Lemma \ref{nondependenceofdominance}). Then we have in $w(\T)$: 

\begin{align*}
(i,i) - (i+1,i+1) - \overset{l}{\underset{k = i}{\sum}}[(\overline{i},k)-(\overline{i+1},k-1)] < c_{i}(b).
\end{align*}

\end{proof}

For a stable weight $\lambda$ and $\mu\subset \lambda$ let $\mathcal{DR}(\lambda, \mu)\subset \mathbb{R}^{\frac{n(n+1)}{2}}$ be the convex polytope of vectors as in \eqref{skewtab} satisfying the inequalities of Proposition \ref{dominanceinequalitiesdomres}.

By Lemma \ref{skewvariablelemma} and Proposition \ref{dominanceinequalitiesdomres} we have the following theorem.

\begin{thm} Let $\mu \subset \lambda$ be dominant stable weights. The map
\begin{align*}
\op{domres}(\lambda, \mu) & \rightarrow \mathcal{DR}(\lambda, \mu) \\
 \T & \mapsto w(\T)
\end{align*}
is a bijection between the tableaux in $\op{domres}(\lambda, \mu)$ and the lattice points of the polytope $ \mathcal{DR}(\lambda, \mu)$.
\end{thm}

\section{Open Problems}

1. In view of bijection (\ref{thebijection}) it would be interesting to know if there is a unimodular linear map

\begin{align*}
\varphi : \mathbb{R}^{\frac{n(n+1)}{2}} \longrightarrow \mathbb{R}^{\frac{n(n+1)}{2}}
\end{align*}
such that 
$$\varphi(\mathcal{DR}(\lambda, \mu))=\mathcal{LR}(\lambda, \mu).$$

\noindent 
If so, does there exist one that restricts to the bijection in (\ref{thebijection})? For $n=2$, this question has been answered in \cite{jaz}, where such a linear bijection is constructed.\\

\noindent
2. Does Theorem \ref{mainbranchingthm} hold for another path model or even a ``generic'' family of path models? \\

3. Let $s_{\lambda}$ be the Schur function in the variables $x_{1}, \ldots, x_{2n}$. This symmetric polynomial is the character of the representation $\op{L}(\lambda)$. The polynomial $\op{res}(s_{\lambda})$ in the variables $x_{1}, \ldots, x_{n}$ obtained by replacing, in $s_{\lambda}$, the variable $x_{j}$ by $x_{2n-j+1}^{-1}$ for $j > n$ and leaving all the other variables ($x_j$ for $j \leq n$) unchanged is the character of the representation

\begin{align}
\label{localforreader}
\op{res}^{\mathfrak{sl}(2n,\mathbb{C})}_{\mathfrak{sl}(2n,\mathbb{C})^{\sigma}} (\li(\lambda)) = \underset{\op{domres}(\lambda)}{\bigoplus} \tilde \li({\delta(1)}).
\end{align}

\noindent It may be written as

\begin{align}
\label{reschur}
\op{res}(s_{\lambda}) = \underset{\T \in \op{res}(\op{SSYT}(\lambda))}{\sum}x^{\op{content}(\T)},
\end{align}

\noindent
where 

\begin{align*}
x^{(q_{1}, \ldots, q_{n})} := x_{1}^{q_{1}}\cdots  x_{n}^{q_{n}} \hbox{ for } (q_{1}, \ldots, q_{n}) \in \mathbb{Z}^{n} \hbox{ and }
\end{align*}

\begin{align*}
\op{content}(\T) := (a_{1}, \ldots, a_{n}) \hbox{ for } a_{i} = \# i's \hbox{ in } \T = \# \bar i's \hbox{ in } \T. 
\end{align*}

\noindent
Now, in view of (\ref{localforreader}), to each element $\eta$ in $\op{domres}(\eta (1))$ there exists a subset $\R_{\lambda} \subset \op{res}(\op{SSYT}(\lambda))$ such that $\eta$ is the only element in  $\op{domres}(\lambda) \cap \R_{\eta(1)}$, and such that

\begin{align*}
s_{\R_{\lambda}} = \underset{\T \in \R_{\lambda}}{\sum}x^{\op{content}(\T)}
\end{align*}

\noindent 
is the character of the $\mathfrak{sl}(2n,\mathbb{C})$ - module $\li (\tilde{\eta(1)})$. Determining such a subset would give a decomposition 

\begin{align*}
\op{res}(s_{\lambda}) =\underset{\eta \in \op{domres}(\lambda)}{ \sum} R_{\eta(1)}.
\end{align*}

\noindent
A similar problem was proposed by Sundaram in the last chapter of her thesis \cite{sundaram}; she worked in the context of the symplectic tableaux of King \cite{king}.
Introducing analogues of crystal operators on the set $\op{domres}(\lambda)$ would be an interesting approach.\\

\noindent
4. For which pairs $\mathfrak{v} \subset \mathfrak{g}$ of semi-simple Lie algebras does Theorem \ref{mainbranchingthm} hold? For which Littelmann path models?

\section*{Acknowledgments}
Both authors would like to thank the University of Cologne and the Max-Planck Institute in Bonn, where most of this work was carried out. Further we thank Daisuke Sagaki for some interesting discussions and an informative meeting on Skype. J.T. would like to thank Pierre-Emmanuel Chaput for an interesting conversation. During the developing of the present work many computations were performed using \cite{sage} and \cite{polymake} . B.S. was supported by the Japan Society for the Promotion of Science, J.T. was supported by the Max-Planck Institute for Mathematics in Bonn and both authors were partially supported by the Priority Programme SPP 1388 ``Representation theory''.

\bibliographystyle{abbrv}

\end{document}